\newtheorem{thm}{Theorem}[section]
\newtheorem{lem}[thm]{Lemma}
\newtheorem{prop}[thm]{Proposition}
\newtheorem{conj}[thm]{Conjecture}
\newtheorem{cor}[thm]{Corollary}
\newtheorem{exa}[thm]{Example}
\newtheorem{rmk}[thm]{Remark}
\newtheorem{qst}[thm]{Question}
\newtheorem{claim}{Claim}
\newtheorem{ass}{Assumption}
\theoremstyle{definition}
\DeclareMathOperator\Aut{Aut}
\DeclareMathOperator\sym{Sym}
\DeclareMathOperator\alt{Alt}
\newcommand{\agl}[2]{\operatorname{AGL}_#1(#2)}
\newcommand{\psl}[2]{\operatorname{PSL}_#1(#2)}
\newcommand{\pgammal}[2]{\operatorname{P\Gamma L}_#1(#2)}
\newcommand{\mathieu}[1]{\operatorname{M}_{#1}}
\newcommand{\pg}[2]{\operatorname{PG}_{#1}(#2)}
\newcommand{\dih}[1]{\operatorname{D}_{#1}}
\newcommand{\itbf}[1]{{\bf{{\emph{{#1}}}}}}
\letcs\replicate{prg_replicate:nn}
\begin{document}	
	
	\title[]{The Erd\H{o}s-Ko-Rado Theorem for non-quasiprimitive groups of degree $3p$}

	\subjclass[2010]{Primary 05C35; Secondary 05C69, 20B05}
	
	\keywords{Derangement graphs, cocliques, projective special linear groups}
	
	\date{\today}
	
	\maketitle
	\begin{center}
		{\sc Roghayeh Maleki \quad\qquad Andriaherimanana Sarobidy\\ \hspace*{4.5cm} Razafimahatratra\textsuperscript{*}}
	\end{center}
	\vspace*{0.4cm}
	\begin{center}
		{\Small\it University of Primorska, UP FAMNIT, Glagolja\v{s}ka 8, 6000 Koper, Slovenia\\
		University of Primorska, UP IAM, Muzejski trg 2, 6000 Koper, Slovenia\\
		\textsuperscript{*} Corresponding author
		}
	\end{center}
	
	\begin{abstract}
		The \itbf{intersection density} of a finite transitive group $G\leq \sym(\Omega)$ is the rational number $\rho(G)$ given by the ratio between the maximum size of a subset of $G$ in which any two permutations agree on some elements of $\Omega$ and the order of a point stabilizer of $G$. In 2022, Meagher asked whether $\rho(G)\in \{1,\frac{3}{2},3\}$ for any transitive group $G$ of degree $3p$, where $p\geq 5$ is an odd prime. For the primitive case, it was proved in [\emph{J. Combin. Ser. A}, 194:105707, 2023] that the intersection density is $1$. 
		
		It is shown in this paper that the answer to this question is affirmative for non-quasiprimitive groups, unless possibly when $p = q+1$ is a Fermat prime and $\Omega$ admits a unique $G$-invariant partition $\mathcal{B}$ such that the induced action $\overline{G}_\mathcal{B}$ of $G$ on $\mathcal{B}$ is an almost simple group containing $\psl{2}{q}$. 
	\end{abstract}
	%\tableofcontents
	
	\section{Introduction}
	
	The Erd\H{o}s-Ko-Rado (EKR) Theorem \cite{erdos1961intersection} is a fundamental theorem in extremal set theory. For any two positive integers $n\geq k\geq 1$, we let $\binom{[n]}{k}$ be the collection of all $k$-subsets of $\{1,2,\ldots,n\}$. The EKR Theorem is stated as follows.
	\begin{thm}[Erd\H{o}s-Ko-Rado]
		Let $n\geq k \geq 1$ be two positive integers such that $n\geq 2k$. If $\mathcal{F} \subset \binom{[n]}{k}$ such that $A\cap B \neq \varnothing$ for all $A,B\in \mathcal{F}$, then
		\begin{align*}
			\left|\mathcal{F}\right|\leq \binom{n-1}{k-1}.
		\end{align*}
		Moreover, if $n\geq 2k+1$ then equality holds if and only if $\mathcal{F}$ is the collection of all $k$-subsets of $\{1,2,\ldots,n\}$ containing a prescribed element.
	\end{thm}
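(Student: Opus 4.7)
I would prove this via Katona's cyclic permutation method, which yields the bound through an elegant double count. First, consider all $n!$ linear orderings of $[n]$, each inducing a cyclic arrangement; for a cyclic arrangement $\sigma$, call a $k$-subset $A \subseteq [n]$ a \emph{$\sigma$-arc} if its elements occupy $k$ consecutive positions of $\sigma$.

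The central lemma I would prove is: if $n \geq 2k$ and $\mathcal{F}$ is intersecting, then for every cyclic arrangement $\sigma$, at most $k$ members of $\mathcal{F}$ are $\sigma$-arcs. To see this, fix one $\sigma$-arc $A \in \mathcal{F}$ and label the other $\sigma$-arcs intersecting $A$ by their cyclic shifts $j$ with $1 \leq |j| \leq k-1$, giving $2(k-1)$ candidates. The assumption $n \geq 2k$ forces the arcs of shifts $j$ and $j-k$ (for $1 \leq j \leq k-1$) to be disjoint, so $\mathcal{F}$ contains at most one arc from each such pair; adding $A$ itself yields the cap of $k$.

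With the lemma in hand, I would count pairs $(A,\sigma)$ with $A \in \mathcal{F}$ a $\sigma$-arc in two ways. On the one hand the total is at most $k \cdot n!$; on the other, each fixed $k$-set is a $\sigma$-arc in exactly $n \cdot k!\,(n-k)!$ of the $n!$ orderings (choose the arc's starting position, then order the arc internally, then order the remaining $n-k$ elements). Rearranging
\[
|\mathcal{F}| \cdot n \cdot k!\,(n-k)! \;\leq\; k \cdot n!
\]
gives $|\mathcal{F}| \leq \binom{n-1}{k-1}$.

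For the uniqueness statement when $n \geq 2k+1$, I would trace equality back through the argument: extremality forces exactly $k$ arcs of $\mathcal{F}$ in every cyclic arrangement, and by varying $\sigma$ one can pin down a common element shared by all members of $\mathcal{F}$, showing that $\mathcal{F}$ is a star. Alternatively, one can invoke the shifting (compression) operators $S_{ij}$ of Erd\H{o}s--Ko--Rado, which preserve both the size and the intersecting property, reducing to the case of a left-compressed family and then forcing a star by induction on $n$. The main obstacle is precisely this uniqueness step: the bound itself is immediate from the arc lemma, but ruling out non-star extremal families --- and clarifying why the hypothesis must be strengthened to $n \geq 2k+1$ (at $n = 2k$ one can mix $k$-sets from complementary pairs to produce non-star extremizers) --- requires either Katona's delicate extremal analysis or the machinery of compressions.
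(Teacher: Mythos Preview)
Your proposal is correct: Katona's cyclic-arc argument is a valid and well-known route to the bound, and the outline you give for the arc lemma and the double count is accurate. The uniqueness step is indeed the subtler part, but either of the approaches you mention (tracing equality through the arc count, or compressions) can be made to work with care.

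There is nothing to compare against here, however: the paper does not prove this theorem. It is stated in the introduction purely as motivation and historical background, with a citation to the original Erd\H{o}s--Ko--Rado paper \cite{erdos1961intersection}, and the authors immediately move on to the permutation-group analogue that is their actual subject. So your proof, while correct, is not an alternative to anything in the paper --- it simply supplies an argument the authors chose to omit as classical.
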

	
	The EKR Theorem has been extended to various combinatorial objects throughout the years. See the monograph \cite{godsil2016erdos} on EKR type results for more details. This paper is concerned with the extension of the EKR Theorem to transitive permutation groups of fixed degrees.
	
	All groups considered in this paper are finite and all graphs are simple and undirected. Given a transitive group $G\leq \sym(\Omega)$, we say that $\mathcal{F} \subset G$ is \itbf{intersecting} if for any $g,h\in \mathcal{F}$, there exists $\omega\in \Omega$ such that $\omega^g = \omega^h$. For any $\omega\in \Omega$, the stabilizer $G_\omega = \{ g\in G : \omega^g = \omega \}$ of $\omega$ and its cosets are obvious intersecting sets of $G$. The \itbf{intersection density} of the transitive group $G$ is 
	\begin{align*}
		\rho(G):= \frac{1}{|G_\omega|}\max\left\{ |\mathcal{F}|:\ \mathcal{F} \subset G \mbox{ is intersecting} \right\}.
	\end{align*}
	We note that $\rho(G)\geq 1$ since $G_\omega$ itself is intersecting. Transitive groups with intersection density equal to $1$ have been subject to a great deal of focus since the paper of Deza and Frankl \cite{Frankl1977maximum} in the late 70s. See for instance \cite{ahmadi2013erd,cameron2003intersecting,ernst_schmidt_2023,ellis2011intersecting,godsil2009new,larose2004stable,meagher2011erdHos,spiga2019erdHos}.
	
	Recently, works on transitive groups with intersection density larger than $1$ have appeared in the literature. In particular, the paper \cite{li2020erd} by Li, Song and Pantangi, and \cite{meagher180triangles} by Meagher, Spiga and the second author explored the theory of the transitive groups with intersection density larger than $1$. In \cite{meagher180triangles}, the following conjectures were posed.
	\begin{conj}
		Let $G\leq \sym(\Omega)$ be a transitive group.
		\begin{enumerate}%{()}
			\item If $|\Omega|$ is a prime power, then $\rho(G) = 1$.\label{first}
			\item If $|\Omega| = 2p$, where $p$ is an odd prime, then $1\leq \rho(G) \leq 2$.\label{second}
			\item If $|\Omega| = pq$, where $p$ and $q$ are two odd primes, then $\rho(G) = 1$.\label{third}
		\end{enumerate}\label{conj}
	\end{conj}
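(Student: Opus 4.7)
The plan is to address all three parts of the conjecture through the framework of the derangement graph $\Gamma_G$, whose vertex set is $G$ and whose edges join $g,h \in G$ such that $gh^{-1}$ is a derangement. Intersecting subsets of $G$ are exactly the cocliques of $\Gamma_G$, and $\rho(G) = \alpha(\Gamma_G)/|G_\omega|$. Three tools would play a central role: the clique-coclique bound, which forces $\rho(G) = 1$ whenever $G$ contains a sharply transitive subgroup (such a subgroup is a clique of size $|\Omega|$ in $\Gamma_G$); Hoffman's ratio bound applied with spectral information coming from the irreducible characters of $G$; and quotient-graph reductions through block systems of $G$, which relate $\alpha(\Gamma_G)$ to $\alpha$ of the derangement graphs of the block-quotient action and of the kernel restricted to a block.

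For part (1), the natural strategy is induction on $k$ where $|\Omega| = p^k$. The base case $k = 1$ is the classical result of Frankl and Deza on cyclic groups of prime order, handled by the clique-coclique bound. For $k \ge 2$, $G$ contains a transitive $p$-subgroup $P$ by Sylow, and since $P$ is nilpotent it admits a nontrivial system of imprimitivity, which refines to a $G$-invariant block system $\mathcal{B}$. The quotient action on $\mathcal{B}$ has prime-power degree $p^j$ with $j<k$, as does the action of a block stabilizer on a single block; the inductive hypothesis yields $\rho = 1$ on both, and a lifting argument would then confine maximum cocliques of $\Gamma_G$ to cosets of $G_\omega$.

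For parts (2) and (3), I would separate the primitive and imprimitive cases. Primitive groups of degree $2p$ and $pq$ are classified by the O'Nan-Scott theorem together with refinements of Liebeck-Praeger-Saxl and Li-Seress, giving short lists of almost simple and affine candidates to handle individually; for each, the clique-coclique bound or direct character-theoretic computation should deliver the stated density. In the imprimitive case, the block sizes are $2$ or $p$ for degree $2p$, and $p$ or $q$ for degree $pq$; the quotient action then has smaller (prime or prime-power) degree, allowing reduction to part (1). The factor $2$ permitted in part (2) reflects precisely the contribution of a block system with two blocks, where a block-swapping element together with the stabilizer can potentially double the natural intersecting family; the oddness of $p$ and $q$ in part (3) eliminates this phenomenon.

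The main obstacle will be the primitive almost simple case and, more subtly, the analysis of imprimitive groups whose block quotient is almost simple. The paper itself signals precisely this difficulty for degree $3p$: when $p = q+1$ is a Fermat prime and the quotient action involves $\psl{2}{q}$, the spectral bounds do not separate cleanly and standard reductions break down, so delicate character-sum arguments or direct combinatorial analysis of the derangement classes are required. Refining these borderline cases, and the analogous ones that would arise in (1) and in the $pq$-setting, is where the bulk of the technical effort would concentrate.
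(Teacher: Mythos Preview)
The statement you are trying to prove is labelled a \emph{conjecture} in the paper, and the paper does not supply a proof of it. More importantly, immediately after stating it the paper records its current status: parts~(1) and~(2) have been established elsewhere, while part~(3) has been \emph{disproved}. Specifically, the paper cites a construction of transitive groups of degree $pq$ with $p=\frac{q^{r}-1}{q-1}$ whose intersection density equals $q>1$. Any argument that purports to establish $\rho(G)=1$ for all transitive groups of degree $pq$ with $p,q$ distinct odd primes is therefore necessarily wrong, and your proposed reduction for part~(3) cannot succeed. Your heuristic that ``the oddness of $p$ and $q$ eliminates this phenomenon'' is exactly what fails: the obstruction is not a parity issue but the existence of block quotients whose derangement structure allows large derangement-free kernels.

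Even restricting attention to part~(1), your inductive step has a genuine gap. You correctly note that a Sylow $p$-subgroup $P$ of $G$ is transitive and, being a nontrivial $p$-group, admits a proper nontrivial $P$-invariant block system. But the assertion that this ``refines to a $G$-invariant block system'' is false in general: take $G=\sym(4)$ in its natural action on four points, which is primitive, while its Sylow $2$-subgroup is imprimitive. Thus your induction never reaches the primitive case, which is where the real work lies; the published proofs of part~(1) proceed quite differently. In short, the proposal should be discarded for part~(3) and substantially reworked for part~(1).
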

	
	Conjecture~\ref{conj}\eqref{first} was  proved independently in \cite{hujdurovic2021intersection} and \cite{li2020erd}, and Conjecture~\ref{conj}\eqref{second} was proved in \cite{AMC2554}. In \cite{hujdurovic2022intersection}, Conjecture~\ref{conj}\eqref{third} was disproved by constructing a family of transitive groups of degree $pq$, where $p = \frac{q^r-1}{q-1}$, whose intersection density is equal to $q$. Though the third conjecture was disproved, it is still of interest to know all the possible intersection densities of transitive groups of degree a product of two distinct odd primes. To this end, we recall the following set which was first defined in \cite{meagher180triangles}
	\begin{align*}
		\mathcal{I}_n := \left\{ \rho(G) : G \leq \sym(\Omega)\mbox{ is transitive with } |\Omega|=n \right\},
	\end{align*}
	for $n\geq 2$. For instance, one can verify using \verb*|Sagemath| \cite{sagemath} that $\mathcal{I}_{15} = \{1\}$, whereas $\mathcal{I}_{10} = \{1,2\}$.
	
	Let $G\leq \sym(\Omega)$ be a finite transitive group. A $G$\itbf{-invariant partition}, or \itbf{system of imprimitivity},  or a \itbf{complete block system} of $G$ is a partition that is preserved by the action of $G$. That is, given a partition $\mathcal{B}$ of $\Omega$, either $B^g = B$ or $B^g \cap B=\varnothing$, for any $B\in \mathcal{B}$ and $g\in G$. The elements of a $G$-invariant partition of $G$ are called \itbf{blocks}. The trivial partitions of $G$ are $\{\Omega\}$ and $\{ \{\omega\} : \omega \in \Omega \}$. If the only $G$-invariant partitions of $G$ are the trivial ones, then we say that $G$ is \itbf{primitive}; otherwise, it is \itbf{imprimitive}. The group $G\leq \sym(\Omega)$ is \itbf{quasiprimitive} if any non-trivial subgroup of $G$ is transitive. The transitive groups on $\Omega$ can be subdivided into three categories: the primitive groups, the quasiprimitive imprimitive groups, and the non-quasiprimitive imprimitive groups. Let us define the following subsets of $\mathcal{I}_n$, for $n\geq 1$, with respect to these categories
	\begin{align*}
		\mathcal{P}_n &:= \left\{ \rho(G):\ G \mbox{ is primitive of degree $n$} \right\},\\
		\mathcal{Q}_{n} &:= \left\{ \rho(G) :\ G \mbox{ is quasiprimitive and imprimitive of degree $n$} \right\} ,\\
		\mathcal{NQ}_n&:= \left\{ \rho(G) :\ G\mbox{ is non-quasiprimitive and imprimitive of degree $n$} \right\}.
	\end{align*}

	In \cite{razafimahatratra2021intersection}, most of the primitive groups of degree a product of two distinct odd primes were shown to have intersection density equal to $1$. In \cite{behajaina2022intersection}, some results on the intersection density of imprimitive groups of degree a product of two odd primes were also proved. This paper is part of a larger project whose aim is to determine the exact set $\mathcal{I}_{pq}$ for any two distinct of primes $p$ and $q$. We restrict ourselves to the study of the set $\mathcal{I}_{3p}$, where $p\geq 5$ is an odd prime. The motivation for this work is the following question due to Meagher.
	\begin{qst}
		Let $p\geq 5$ be an odd prime. Is it true that $\mathcal{I}_{3p}\subseteq \{1,\frac{3}{2},3\}$?\label{qst:karen}
	\end{qst}
	
	In \cite{razafimahatratra2021intersection}, it was shown that $\mathcal{P}_{3p}=\{ 1 \}$, for any prime $p\geq 5$. Therefore, we only need to consider the imprimitive groups of degree $3p$. If $G\leq \sym(\Omega)$ is imprimitive and non-quasiprimitive, then there exists a non-trivial subgroup $N\trianglelefteq G$ which is intransitive. The set $\mathcal{B}$ of orbits of $N$ forms a $G$-invariant partition of $\Omega$ and the induced action, denoted by $\overline{G}_{\mathcal{B}}$, of $G$ on these orbits is transitive of degree $p$, so it is either solvable or $2$-transitive. By \cite{hujdurovic2022intersection} and \cite{razafimahatratra2021intersection}, if $\Omega$ has more than one non-trivial $G$-invariant partitions or it admits a $G$-invariant partitions whose blocks are of size $p$, then $\rho(G) = 1$. Hence, we can always assume that $\mathcal{B}$ is the unique $G$-invariant partition of $G$.
	
	 In this paper, we prove the following theorem.
	\begin{thm}
		Let $G\leq \sym(\Omega)$ be transitive of degree $3p$, $1\neq N\trianglelefteq G$ be intransitive, and $\overline{G}$ be the induced action of $G$ on the $G$-invariant partitions of $\Omega$ consisting of orbits of $N$. One of the following occurs.
		\begin{enumerate}
			\item If $\overline{G}$ is solvable, then $\rho(G) \in \{1,\frac{3}{2},3\}$.
			\item If $\overline{G}$ is $2$-transitive, then $\rho(G)\in \{1,\frac{3}{2},3\}$ or $\overline{G}$ is an almost simple group containing $\psl{2}{q}$ with $p=q+1$ a Fermat prime. 
		\end{enumerate}
		In particular, if $p\geq 5$ is not a Fermat prime, then $ \mathcal{NQ}_{3p} \subset \{ 1,\frac{3}{2},3 \}$.\label{thm:main}
	\end{thm}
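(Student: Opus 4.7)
The plan is to first reduce to the setting where $\mathcal{B}$ is the unique $G$-invariant partition and its blocks have size $3$, so that $|\mathcal{B}|=p$, since the remarks preceding the theorem show that both the configuration with blocks of size $p$ and the presence of two different invariant partitions force $\rho(G)=1$ outright. Writing $K:=\ker(G\to\overline{G})\le\Sy{3}^p$, the group $\overline{G}$ then acts faithfully and transitively on $\mathcal{B}$ of prime degree $p$, and Burnside's classical dichotomy for transitive groups of prime degree splits the analysis into exactly the two cases of the statement: $\overline{G}$ is either a solvable subgroup of $\agl{1}{p}$, or it is $2$-transitive and almost simple.

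A uniform preliminary step establishes the bound $\rho(G)\le 3$ in all remaining cases. Since $p\ge 5$ and $|K|$ is supported on the primes $2$ and $3$, a Sylow $p$-subgroup $P\le G$ is cyclic of order $p$, and because its image in $\overline{G}$ is transitive on $\mathcal{B}$, the subgroup $P$ has no fixed points and splits $\Omega$ into three regular $P$-orbits. This makes $P$ a clique of size $p$ in the derangement graph $\Gamma_G$, and the clique-coclique bound yields $\rho(G)\le 3$. To refine this into the target conclusion $\rho(G)\in\{1,\tfrac{3}{2},3\}$, I would on the one hand seek either a regular subgroup of $G$ of order $3p$ (forcing $\rho(G)=1$) or a clique of size $2p$ (forcing $\rho(G)\le\tfrac{3}{2}$), and on the other hand construct explicit intersecting families realising the densities $\tfrac{3}{2}$ or $3$ to match the corresponding clique bounds.

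For the solvable case, decompose $\overline{G}=\langle\overline{a}\rangle\rtimes\langle\overline{b}\rangle\le\agl{1}{p}$ with $\overline{a}$ a $p$-cycle on $\mathcal{B}$, and analyse the extension $1\to K\to G\to\overline{G}\to 1$ by cases on the shape of $K$ (trivial, a diagonally embedded $\Sy{3}$, a product of cyclic groups of order $3$, a product of $\Sy{3}$-factors, or intermediate configurations). In each case I would first attempt to lift $\overline{a}$ to an element of $G$ of order $3p$ acting as a single cycle on $\Omega$ by composing a $p$-cycle lift with a compatible $3$-cycle of $K$; success yields a sharply transitive cyclic subgroup and $\rho(G)=1$. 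When the $\overline{b}$-action on $K$ obstructs this lift, one instead combines $P$ with an involution preserving a block to manufacture a clique of size $2p$ matched by cosets of the block stabiliser, giving $\rho(G)=\tfrac{3}{2}$. The residual configurations, typically those in which $K$ contains a full $\Sy{3}$-factor and so the point stabiliser has order $2$, produce intersecting families of density exactly $3$.

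For the $2$-transitive case, invoke the classification of $2$-transitive groups of prime degree: $\overline{G}$ is almost simple with socle $\alt_p$, $\psl{d}{q}$ (with $p=(q^d-1)/(q-1)$), $\psl{2}{11}$, $\mathieu{11}$, or $\mathieu{23}$. When the socle is $\alt_p$, so that $\overline{G}\in\{\alt_p,\sym_p\}$, the lift-a-$p$-cycle construction produces a regular cyclic subgroup of $G$ of order $3p$ and hence $\rho(G)=1$. The sporadic cases $\psl{2}{11}$, $\mathieu{11}$, $\mathieu{23}$ are treated directly from their subgroup lattices, and for socle $\psl{d}{q}$ with $d\ge 3$ a Singer cycle of order $p$ extends to a regular subgroup of order $3p$. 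The main obstacle and sole exception is $\psl{2}{q}$ with $p=q+1$: here the requirement that $p$ be an odd prime forces $q$ to be a power of $2$ and $p$ a Fermat prime, and the rigid dihedral point-stabiliser structure of $\psl{2}{q}$ blocks all of the clique-enlargement strategies above, which is precisely why this case is singled out as unresolved in the theorem.
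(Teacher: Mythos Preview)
Your plan diverges substantially from the paper's proof, and several of the steps you outline do not go through as stated.

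The paper does \emph{not} proceed by building large cliques (regular $3p$-cycles or $2p$-cliques) in $\Gamma_G$. After reducing to the situation where $K:=\ker(G\to\overline{G})$ is nontrivial and \emph{derangement-free} (a reduction you never invoke, but which is essential: if $K$ contains a derangement the paper already knows $\rho(G)=1$), the paper computes $\rho(G)$ \emph{exactly}. In the solvable case it shows that $\Gamma_{G}$ is, or admits a homomorphism from, one of the auxiliary graphs $\Gamma_{p,d}^{3,\Sigma}(|K|)$ introduced in Section~3, whose independence number is $\max\{|K|,\tfrac{d|K|}{3}\}$; this yields the closed formula $\rho(G)=\max\{1,\tfrac{3}{d}\}$ (or $\max\{1,\tfrac{6}{d}\}$ in the transposition subcase), which automatically lands in $\{1,\tfrac{3}{2},3\}$. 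For the $2$-transitive case the paper does \emph{not} attempt to build regular subgroups inside $G$; instead it passes to the preimage $M\le G$ of $N_{\overline{G}}(P)=P\rtimes Q$ and applies the solvable-case formula to $M$, so the whole argument rides on showing $|Q|\ge 3$, which fails only for $\psl{2}{q}$ with $p=q+1$ Fermat.

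Your clique strategy has two concrete gaps. First, the ``lift $\overline a$ to a single $3p$-cycle'' step is not justified: whether some $ka$ with $k\in K$ is a $3p$-cycle depends delicately on the product $\prod_{i=0}^{p-1} a^i k a^{-i}$ in $K$, and you give no argument that a suitable $k$ exists in your listed cases (nor, in the $2$-transitive case with socle $\alt_p$ or $\psl{d}{q}$, any reason why the extension $1\to K\to G\to\overline{G}\to 1$ permits such a lift). Second, and more seriously, even when a $2p$-clique exists the clique--coclique bound only gives $\rho(G)\le\tfrac{3}{2}$; it does not rule out values strictly between $1$ and $\tfrac{3}{2}$. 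You would still need an argument pinning $\rho(G)$ to one of the three target values, and ``cosets of the block stabiliser'' give density $3$, not $\tfrac{3}{2}$, so your proposed matching family is wrong. The paper's exact formula sidesteps this entirely: once $\rho(G)=\max\{1,3/d\}$ with $d\mid p-1$, the value set is forced.
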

	
	\subsection*{Strategy of the proof}
	Our proof relies heavily on a characterization of groups with intersection density larger than $1$ in \cite{behajaina2022intersection}. Suppose that $\mathcal{B} = \{B_1,B_2,\ldots,B_p\}$ is the unique $G$-invariant partition of $\Omega$. We will see that in our case an imprimitive group is non-quasiprimitive if and only if the kernel $\ker(G\to \overline{G}) $ of the canonical homomorphism $G\to \overline{G}$ is non-trivial. It was shown in \cite{behajaina2022intersection} that the only possible groups giving intersection density larger than $1$ in the class of non-quasiprimitive imprimitive groups are those with the property that $\ker(G\to\overline{G})$ is derangement-free. 
	
	The analysis can then be divided into two cases, depending on whether $\overline{G}$ is solvable, that is, $\overline{G} = \langle \alpha\rangle \rtimes \langle\beta\rangle \leq \agl{1}{p}$ or $2$-transitive. We will show that if $\overline{G}$ is solvable, then the analysis can be further subdivided into two subcases depending on whether $\ker(G\to\overline{G})$ admits an involution or not. If $\ker(G\to\overline{G})$ has no involutions, then the derangement graph of $G$ (see Section~\ref{sect:der-graph}) is a lexicographic product of a graph from a family of graphs defined in Section~\ref{sect:graph-theory} and an empty graph. From this we can show that the intersection density is in $\{1,\frac{3}{2},3\}$. Using the graph in Section~\ref{sect:graph-theory} along with the No-Homomorphism Lemma, we obtain that the intersection density also belongs to $\{1,\frac{3}{2},3\}$ for the case where $\ker(G\to \overline{G})$ has an involution. If $\overline{G}$ is $2$-transitive, then we show that $G$ must contain a transitive subgroup $H$ with similar properties to $G$ except that $\overline{H}$ is solvable.  We obtain an upper bound equal to $1$, except when $p$ is a Fermat prime and $\overline{G}$ is an almost simple group containing a projective special linear group of degree $2$, in which case we only get an upper bound equal to $\frac{3}{2}$.
	
	The proof of Theorem~\ref{thm:main} follows from Theorem~\ref{thm:special-case}, Theorem~\ref{thm:special-case-2}, Theorem~\ref{thm:special-case-3}, and Theorem~\ref{thm:special-case-4}.
	\subsection*{Organization of the paper}
	In Section~\ref{sect:permutation-groups}, we give some necessary background results from permutation group theory. In Section~\ref{sect:graph-theory}, we define a family of graphs that are crucial to the proof of the main result.  In Section~\ref{sect:analysis}, we give an analysis of the cases to consider. Then, the solvable case is proved in Section~\ref{sect:solvable}, Section~\ref{sect:no-involutions}, and Section~\ref{sect:involutions}. In Section~\ref{sect:2-transitive}, we prove the $2$-transitive case. We give some open problems regarding the remaining cases in Section~\ref{sect:conclusion}.

	\section{Background results on permutation group theory}\label{sect:permutation-groups}
	
	\subsection{Basic notions}
	 Henceforth, we let $G\leq \sym(\Omega)$ be a transitive group.
	An $(m,n)$\itbf{-semi-regular} element of $G$ is a permutation which is a product of $n$ cycles of length $m$. If $m$ and $n$ are clear from the context, then we just use the term \itbf{semi-regular} element. A \itbf{semi-regular subgroup} of $G$ is a subgroup $H$ with the property that for any two elements $\omega,\omega^\prime \in \Omega$, there exists at most one element $h\in H$ such that $\omega^\prime = \omega^h$. A subgroup generated by a semi-regular element is clearly a semi-regular subgroup. 
	
	We say that $G\leq \sym(\Omega)$ is \itbf{$2$-transitive} or \itbf{doubly transitive} if for any pairs of elements $(\omega_1,\omega_2),(\omega_3,\omega_4) \in \Omega\times \Omega$ such that $\omega_1\neq \omega_2$ and $\omega_3\neq \omega_4$, there exists $g\in G$ such that $(\omega_1,\omega_2)^g = (\omega_3,\omega_4)$. In other words, $G$ is transitive on $\Omega^{(2)} =\{ (\omega,\omega^\prime)\in \Omega \times \Omega: \omega\neq \omega^\prime \}$.

	For any $G$-invariant partition $\mathcal{B}$ of a transitive group $G\leq \sym(\Omega)$, we may define $\overline{G}_{\mathcal{B}} = \{ \overline{g} : g\in G \}$, where $\overline{g}$ is the permutation of $\mathcal{B}$ induced by $g$. As the groups that we study in this paper admit a unique non-trivial $G$-invariant partition, we will use the notation $\overline{G}$ instead of $\overline{G}_{\mathcal{B}}$. Clearly, $\overline{G}\leq \sym(\mathcal{B})$ is transitive. Consequently, $G$ acts on $\mathcal{B}$ via the natural group homomorphisms $G\to \overline{G} \to \sym(\mathcal{B})$. We define $\ker(G \to \overline{G})$ to be the kernel of the induced action of $G$ on $\mathcal{B}$. By the First Isomorphism Theorem, we have $G/\ker(G\to\overline{G}) \cong \overline{G}$, and so we note that the action of $G$ on $\mathcal{B}$ corresponds to a permutation group of $\sym(\mathcal{B})$ if and only if $\ker(G\to \overline{G})$ is trivial. 
	
	Given $g\in G$ and $B \subset \Omega$ such that $B^g = B$, we let $g_{|B}$ be the restriction of the permutation $g \in \sym(\Omega)$ onto $B$. We will denote the order of $g\in G$ by $o(g)$.
	
	\subsection{Transitive groups of prime order}	
	Throughout this subsection, we assume that $G\leq \sym(\Omega)$ is transitive of prime degree $p$. Recall that the socle of a group is the subgroup generated by its minimal normal subgroups. Let $\operatorname{Soc}(G)$ denote the socle of $G$.
	
	It was already known in the late 1800s due to Burnside that a transitive group of prime degree had to be solvable or $2$-transitive. The Classification of Finite Simple Groups (CFSG) made it possible to obtain important classification results in permutation group theory. One of such classifications is that of the $2$-transitive groups. A transitive group $G$ of degree $p$ that is $2$-transitive has the property $\operatorname{Soc}(G)\leq G\leq \Aut(\operatorname{Soc}(G))$. The possibilities for the socles of $G$ are
	\begin{enumerate}[(a)]
		\item $\operatorname{Soc}(G) = C_p$, in which case $G = \agl{1}{p}$,\label{trans1}
		\item $\operatorname{Soc}(G) = \alt(p)$,\label{trans2}
		\item $p = 11$ and $G = \operatorname{Soc}(G) = \mathieu{11}$ or $G = \operatorname{Soc}(G) = \psl{2}{11},$,\label{trans3}
		\item $p = 23$ and $G = \operatorname{Soc}(G) = \psl{2}{23}$ or $G = \operatorname{Soc}(G) = \mathieu{23}$,\label{trans4}
		\item $p = \frac{q^n-1}{q-1}$ and $\operatorname{Soc}(G)= \psl{n}{q}$, for some prime power $q$.\label{trans5}
	\end{enumerate}
	For \eqref{trans5}, since $p = \frac{q^n-1}{q-1}$ is a prime, it is not hard to show that in fact $n$ itself is a prime. If $n = 2$, then since $p = q+1$ is an odd prime, we must have that $q$ is an even power of $2$ and $p$ is therefore a Fermat prime.
	
	\section{Graph Theory}\label{sect:graph-theory}
	
	Given a graph $X=(V,E)$, we use the notation $x\sim_X y$ to represent the fact that $\{x,y\} \in E$, or equivalently, $x$ and $y$ are adjacent. A \itbf{clique} in $X$ is a subset of vertices in which any two are adjacent. A \itbf{coclique} in $X$ is a subset of vertices in which no two are adjacent. The maximum size of a clique and a coclique in a graph $X$ are denoted respectively by $\omega(X)$ and $\alpha(X)$.

	Let $X$ and $Y$ be two graphs. The \itbf{lexicographic product} $X[Y]$ of $X$ and $Y$ (in this order) is the graph with vertex set $V(X)\times V(Y)$ such that two vertices $(x,y)$ and $(x^\prime,y^\prime)$ in $V(X)\times V(Y)$ are adjacent if and only if 
	\begin{align*}
		\begin{cases}
			x\sim_X x^\prime, \mbox{ or }\\
			x = x^\prime \mbox{ and } y\sim_Y y^\prime.
		\end{cases}
	\end{align*}
	
	Let $m,n\geq 2$, $k$ and $r$ be positive integers such that $k\mid r$. For any positive integer $t$, we let $[t]:= \left\{ 1,2,\ldots,t \right\}$. Define the graph $\Gamma_{m,n}^k(r)$ whose vertex set is 
	\begin{align*}
		V = \left\{ (a,b,c): a\in [r], b\in [m],c\in [n]\right\}.
	\end{align*}
	Let $\pi = \{ P_1,P_2,\ldots,P_k \}$ be a uniform partition of $[r]$, i.e., a partition whose parts are of equal size. The edge set of $\Gamma_{m,n}^k(r)$ is defined in a way that
	\begin{align*}
		(a,b,c)\sim (a^\prime,b^\prime,c^\prime) \Leftrightarrow
		\begin{cases}
			c = c^\prime \mbox{ and } b\neq b^\prime, \mbox{ or }\\
			c\neq c^\prime \mbox{ and $a$ and $a^\prime$ are in different parts of $\pi$.} 
		\end{cases}
	\end{align*}
	We note that $\Gamma^k_{m,n}(r)$ is independent of the uniform partition $\pi$ since using a different uniform partition with $k$ blocks yields an isomorphic graph. Hence, we fix a uniform partition $\pi =\{P_1,\ldots,P_k\}$.
	
	\begin{exa}
		The graph in Figure~\ref{fig2} is $\Gamma_{3,2}^3(6)$. If the edge between two blobs is black, then the corresponding induced subgraph is $K_{6,6}$, and if it is red, then the induced subgraph is $X[\overline{K}_6]$, where $X$ is the graph define in Figure~\ref{fig1}.
		\begin{figure}[t]
			\centering
			\includegraphics[width=7cm]{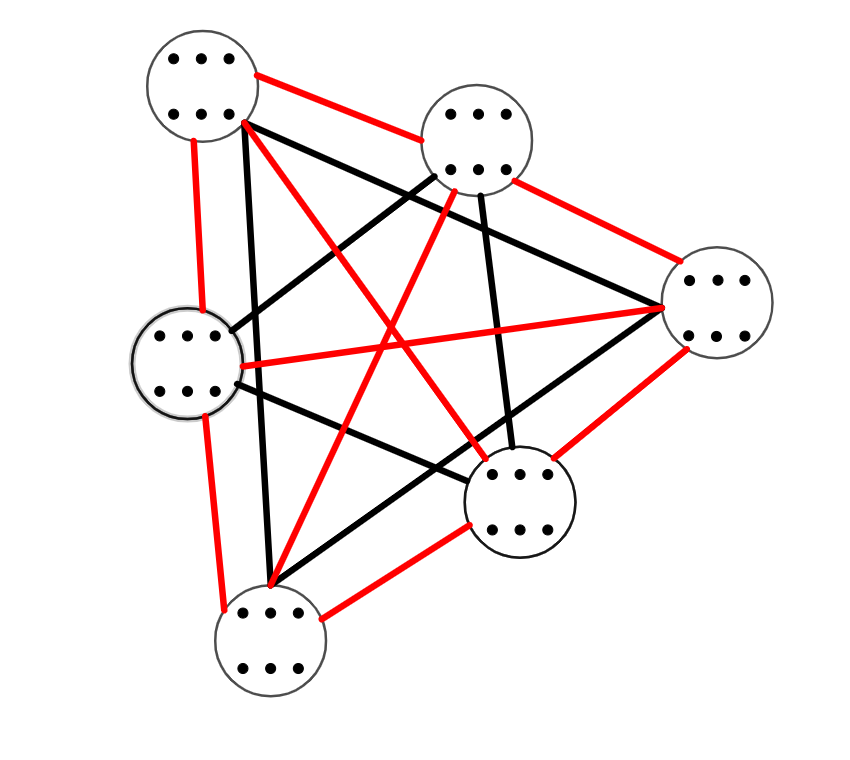}
			\caption{The graph $\Gamma_{3,2}^3(6)$}\label{fig2}
		\end{figure}
	\end{exa}
	
	\begin{figure}[b]
		\includegraphics[width=5cm]{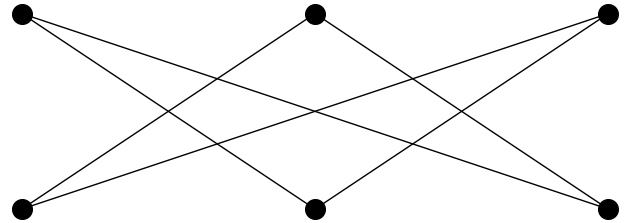}
		\caption{The graph $K_{3,3}$ minus a perfect matching.}\label{fig1}
	\end{figure}
	For any fixed $b\in [m]$ and $c\in [n]$, define $U_{b,c}(r):=\{ (a,b,c): a\in [r] \}$. We note that the set $U_{b,c}(r)$ is a coclique of $\Gamma_{m,n}^k(r)$. Let us now analyze the edges between these cocliques in the graph $\Gamma_{m,n}^k(r)$. We omit the proof of the next proposition since it follows directly from the definition of the edge set.
	\begin{prop}
		Let $b,b^\prime \in [m]  $, and $c,c^\prime \in [n]$ such that $(b,c) \neq (b^\prime,c^\prime)$. Then, one of the following holds.
		\begin{enumerate}
			\item If $c=c^\prime$, then the subgraph of $\Gamma_{m,n}^k(r)$ induced by $U_{b,c}(r) \cup U_{b^\prime,c^\prime}(r)$ is a complete bipartite graph $K_{r,r}$.\label{prop:complete-mult}
			\item If $c\neq c^\prime$, then the subgraph of $\Gamma_{m,n}^k(r)$ induced by $U_{b,c}(r)\cup U_{b^\prime,c^\prime}(r)$ is the lexicographic product $\widetilde{K}_{k,k}[\overline{K_{\frac{r}{k}}}]$, where the graph $\widetilde{K}_{k,k}$ is the graph obtained by removing a perfect matching from the complete bipartite graph $K_{k,k}$.
		\end{enumerate}\label{prop:set}
	\end{prop}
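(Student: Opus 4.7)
My plan is to verify both parts by directly unpacking the adjacency rule defining $\Gamma^k_{m,n}(r)$; no machinery beyond the definitions is needed. The first step is the observation that no edges of $\Gamma^k_{m,n}(r)$ lie inside a single $U_{b,c}(r)$, because two vertices there share both the $b$- and $c$-coordinates and hence neither clause of the adjacency rule can fire (the first clause fails because $b=b'$, the second because $c=c'$). Consequently, whatever structure arises on $U_{b,c}(r)\cup U_{b',c'}(r)$ is bipartite with these two sets as sides, and the task reduces to describing the cross edges.

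For part (1), the hypothesis $c=c'$ together with $(b,c)\ne(b',c')$ forces $b\ne b'$, so the first clause of the adjacency rule is satisfied for every choice of $(a,a')\in[r]\times[r]$, producing all $r^2$ cross edges and hence $K_{r,r}$. This case will be immediate.

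For part (2), the condition $c\ne c'$ deactivates the first clause, so adjacency between $U_{b,c}(r)$ and $U_{b',c'}(r)$ depends only on whether $a$ and $a'$ lie in different parts of $\pi=\{P_1,\ldots,P_k\}$. The key step will be to refine each of $U_{b,c}(r)$ and $U_{b',c'}(r)$ into sub-cocliques $U^i_{b,c}(r):=\{(a,b,c):a\in P_i\}$ and $U^i_{b',c'}(r):=\{(a,b',c'):a\in P_i\}$, each of size $r/k$, and match these $2k$ sub-cocliques with the $2k$ vertices of $\widetilde{K}_{k,k}$ in such a way that the removed matching pairs $U^i_{b,c}(r)$ with $U^i_{b',c'}(r)$ for each $i\in[k]$. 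The adjacency check between blocks then reduces to comparing $\pi$-indices: distinct indices give a complete bipartite $K_{r/k,r/k}$ and coinciding indices give no edges, which is exactly the blow-up $\widetilde{K}_{k,k}[\overline{K_{r/k}}]$.

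The one point requiring care is aligning the removed matching of $\widetilde{K}_{k,k}$ with the ``same part of $\pi$'' case rather than, say, with ``same $b$-coordinate'', which is presumably why the authors chose to isolate this identification as a standalone proposition instead of inlining it in a later computation. Once this bijection is pinned down, the remainder is reading off the definition of a lexicographic product with an empty second factor, and I do not anticipate any genuine obstacle beyond this combinatorial bookkeeping.
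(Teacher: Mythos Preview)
Your proposal is correct and matches the paper's approach: the authors explicitly omit the proof, noting that it follows directly from the definition of the edge set, and your argument is precisely this direct unpacking of the adjacency rule. There is nothing to add.
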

	
	The following is an immediate corollary of the above proposition. 
	\begin{cor}
		We have the graph isomorphism $\Gamma_{m,n}^k(r) \cong \Gamma_{m,n}^k(k)\left[\overline{K_{\frac{r}{k}}}\right]$, where $\overline{K_{\frac{r}{k}}}$ is the complement of the complete graph $K_{\frac{r}{k}}$.
	\end{cor}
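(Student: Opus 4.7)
The plan is to exhibit an explicit bijection between the two vertex sets and then verify the edge correspondence by appealing directly to Proposition~\ref{prop:set}. Fix the uniform partition $\pi = \{P_1,\ldots,P_k\}$ of $[r]$ used to define $\Gamma_{m,n}^k(r)$. Since $k\mid r$, each $P_i$ has size $r/k$, so I would enumerate its elements as $P_i = \{a_{i,1},\ldots,a_{i,r/k}\}$. This gives a bijection $[r] \leftrightarrow [k]\times[r/k]$ sending $a_{i,j} \mapsto (i,j)$, and hence a bijection
\[
\phi \colon V\bigl(\Gamma_{m,n}^k(r)\bigr) \longrightarrow V\bigl(\Gamma_{m,n}^k(k)\bigr) \times [r/k], \qquad (a_{i,j},b,c) \longmapsto \bigl((i,b,c),\, j\bigr).
\]

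Next I would verify that $\phi$ carries each coclique $U_{b,c}(r)$ onto the ``fibre'' $U_{b,c}(k) \times [r/k]$, which is precisely the blow-up of the vertex set $U_{b,c}(k)$ of $\Gamma_{m,n}^k(k)$ in the lexicographic product $\Gamma_{m,n}^k(k)[\overline{K_{r/k}}]$. In both graphs this fibre is independent: on the left by the remark preceding Proposition~\ref{prop:set}, and on the right because the blow-up of a single vertex by $\overline{K_{r/k}}$ introduces no new edges.

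To handle the edges across distinct fibres, I would invoke Proposition~\ref{prop:set} twice, once on each side of the claimed isomorphism, and match the outputs. For $(b,c)\neq(b^\prime,c^\prime)$ with $c=c^\prime$, Proposition~\ref{prop:set}\eqref{prop:complete-mult} gives a $K_{r,r}$ on the left; on the right, the same proposition applied to $\Gamma_{m,n}^k(k)$ gives $K_{k,k}$, whose lexicographic product with $\overline{K_{r/k}}$ is again $K_{r,r}$ (since blowing up both sides of $K_{k,k}$ by an empty graph of size $r/k$ yields a complete bipartite graph on $r$ versus $r$ vertices). For $c\neq c^\prime$, the left side is $\widetilde{K}_{k,k}[\overline{K_{r/k}}]$ by Proposition~\ref{prop:set}(2), and the right side is the lexicographic product of $\widetilde{K}_{k,k}$ (the edge pattern between two distinct fibres in $\Gamma_{m,n}^k(k)$) with $\overline{K_{r/k}}$, giving $\widetilde{K}_{k,k}[\overline{K_{r/k}}]$ as well. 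In both cases one checks that $\phi$ intertwines the two descriptions: a pair $a,a^\prime$ lies in different parts of $\pi$ precisely when their first coordinates $i,i^\prime$ in $[k]$ differ, which is exactly the adjacency condition used on the right.

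There is no real obstacle: the content is bookkeeping, and the only point requiring a moment of care is to confirm that the associativity-like identity $K_{k,k}[\overline{K_{r/k}}] = K_{r,r}$ and the ``pulling the blow-up through'' step $\widetilde{K}_{k,k}[\overline{K_{r/k}}]$ match edge-for-edge with the left-hand description from Proposition~\ref{prop:set}. Once the labelling $a = a_{i,j}$ is fixed, this is immediate.
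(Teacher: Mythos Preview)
Your proof is correct and matches the paper's intent: the corollary is stated there without proof, as an immediate consequence of Proposition~\ref{prop:set}, and your explicit bijection $\phi$ together with the fibre-by-fibre check is exactly the natural way to spell this out. One minor remark: once you have fixed the labelling $a=a_{i,j}$, the adjacency in $\Gamma_{m,n}^k(r)$ reads directly as ``$c=c'$ and $b\neq b'$'' or ``$c\neq c'$ and $i\neq i'$'', which is literally the adjacency in $\Gamma_{m,n}^k(k)[\overline{K_{r/k}}]$; so invoking Proposition~\ref{prop:set} twice and then separately checking that $\phi$ intertwines the descriptions is slightly more work than needed --- the edge check can be done in one line from the definitions.
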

	
	\begin{lem}
		The independence number of $\Gamma_{m,n}^k(k)$ is equal to $\max\{ k,n \}$.\label{lem:max-coclique}
	\end{lem}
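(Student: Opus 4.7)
The plan is to prove matching upper and lower bounds of $\max\{k,n\}$ on $\alpha(\Gamma_{m,n}^k(k))$. First I would unpack the adjacency rule in the special case $r=k$: since the uniform partition $\pi$ then consists of $k$ singletons, ``$a$ and $a'$ lie in different parts of $\pi$'' simplifies to $a\neq a'$. So two distinct vertices $(a,b,c)$ and $(a',b',c')$ of $\Gamma_{m,n}^k(k)$ are adjacent if and only if either $c=c'$ and $b\neq b'$, or $c\neq c'$ and $a\neq a'$; equivalently, they are non-adjacent precisely when $c=c'$ and $b=b'$, or when $c\neq c'$ and $a=a'$.

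For the lower bound, I would exhibit two explicit cocliques. The set $U_{b,c}(k)=\{(a,b,c) : a\in[k]\}$ for fixed $b,c$ is already observed to be a coclique, of size $k$. Dually, for fixed $a_0\in[k]$ and $b_0\in[m]$, the set $\{(a_0,b_0,c) : c\in[n]\}$ is a coclique of size $n$: any two distinct members share the $a$-coordinate $a_0$ and differ in $c$, so they satisfy the second non-adjacency condition. This gives $\alpha(\Gamma_{m,n}^k(k))\geq \max\{k,n\}$.

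For the upper bound, let $S$ be any coclique and let $C$ denote the set of $c$-coordinates that appear among vertices of $S$. If $|C|=1$, then all vertices of $S$ share a common $c$-coordinate, so pairwise non-adjacency forces them to share a common $b$-coordinate as well; hence $S\subseteq U_{b_0,c_0}(k)$ for some $(b_0,c_0)$, yielding $|S|\leq k$. If $|C|\geq 2$, I would pick $(a^\ast,b_1,c_1),(a^\ast,b_2,c_2)\in S$ with $c_1\neq c_2$; non-adjacency of these forces a common first coordinate $a^\ast$. Then for any other $(a,b,c)\in S$, at least one of $c\neq c_1$ or $c\neq c_2$ holds, and non-adjacency with the corresponding anchor forces $a=a^\ast$. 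Thus every vertex of $S$ has first coordinate $a^\ast$, and within such vertices non-adjacency of distinct pairs forces distinct $c$-coordinates, so $|S|\leq n$.

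Combining the two cases yields $\alpha(\Gamma_{m,n}^k(k))\leq \max\{k,n\}$, matching the lower bound. The main obstacle, such as it is, lies in the propagation step in the second case: one must verify that the existence of two anchor vertices with different $c$-coordinates is enough to force a single common $a$-coordinate on all of $S$, rather than allowing two or more different values of $a$ to coexist. This is handled by the pigeonhole observation that any third vertex disagrees in $c$ with at least one of the two anchors.
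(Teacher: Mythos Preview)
Your proof is correct and follows essentially the same approach as the paper's own argument. Both proofs hinge on the observation that once a coclique contains two vertices with different $c$-coordinates, these must share a common first coordinate $a^\ast$, and then every other vertex of the coclique is forced to have first coordinate $a^\ast$ as well, whence at most one vertex per $c$-value survives; your case split on $|C|=1$ versus $|C|\geq 2$ is marginally cleaner than the paper's ``without loss of generality $\mathcal{F}\cap U_{1,1}(k)\neq\varnothing$'' maneuver, but the underlying idea is identical.
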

	\begin{proof}
		Let $\mathcal{F}$ be a coclique of $\Gamma_{m,n}^k(k)$. If $\mathcal{F} \subset U_{1,1}(k)$, then $|\mathcal{F}|\leq k$. Thus, we may assume without loss of generality that $\mathcal{F} \cap U_{1,1}(k)\neq \varnothing$ and that $\mathcal{F} \setminus U_{1,1}(k) \neq \varnothing.$ Let $z\in \mathcal{F} \cap U_{1,1}(k) =  \left\{(a,1,1): a = j\right\}$, for some $j \in [k]$. Let us decompose $\mathcal{F}$ into $$\mathcal{F} = \mathcal{F}_1\cup \mathcal{F}_2\cup \ldots \cup \mathcal{F}_{n},$$ where $\mathcal{F}_i = \mathcal{F} \cap \left( U_{1,i}(k)\cup U_{2,i}(k)\cup \ldots\cup U_{m,i}(k) \right)$, for any $i\in \{1,2,\ldots,n\}$. Then, for any $i \in \{2,3,\ldots,n\}$,  the vertex $z$ is non-adjacent to the vertex $ (j,b,i)$, for any $b\in [m]$. Moreover, $z$ is adjacent to all vertices in $\{(i^\prime,b,i) :i^\prime \neq j \}$, for all $b\in [m]$. By Proposition~\ref{prop:set}~\eqref{prop:complete-mult}, if $\mathcal{F} \cap U_{s,i}(k) \neq \varnothing$, then $\mathcal{F} \cap \left( U_{1,i} \cup \ldots \cup U_{s-1,i} \cup U_{s+1,i} \cup \ldots \cup U_{m,i} \right) = \varnothing$. Hence, we conclude that $|\mathcal{F}_{i}|\leq 1$. By a similar, argument, we also show that $|\mathcal{F}_i|\leq 1$, for any $i\in \{ 1,2,\ldots,n \}$. Consequently, we have that
		\begin{align*}
			\alpha (\Gamma_{m,n}^k(k))\leq \max\{ k,n \}.
		\end{align*}
		It is obvious that the two values in the upper bound is attained by $U_{1,1}(r)$ or by $\{ (1,1,c):  c\in [n] \}$.
	\end{proof}
	\begin{cor}
		We have $\alpha\left( \Gamma_{m,n}^k(k) \right) = \max \left\{ r,\frac{nr}{k} \right\}$.
	\end{cor}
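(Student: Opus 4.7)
The statement as written appears to contain a typographical slip: since the previous lemma already computes $\alpha(\Gamma_{m,n}^k(k))=\max\{k,n\}$, and the displayed formula involves $r$, the intended graph must be $\Gamma_{m,n}^k(r)$. Under this reading, the plan is essentially a one-line deduction combining the two results immediately preceding the corollary with the standard behavior of the independence number under lexicographic product with an empty graph.

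Concretely, I would first invoke the corollary stating the graph isomorphism $\Gamma_{m,n}^k(r) \cong \Gamma_{m,n}^k(k)\bigl[\overline{K_{r/k}}\bigr]$, which reduces the problem to computing the independence number of a lexicographic product of the form $X[\overline{K_t}]$ with $X=\Gamma_{m,n}^k(k)$ and $t=r/k$. Second, I would quote (or give a two-line justification of) the well-known identity
\begin{equation*}
\alpha\bigl(X[\overline{K_t}]\bigr) \;=\; t\cdot \alpha(X),
\end{equation*}
whose proof is routine: a maximum coclique in $X[\overline{K_t}]$ can always be chosen in the form $S\times V(\overline{K_t})$ with $S$ a coclique of $X$, because the vertex classes $\{x\}\times V(\overline{K_t})$ are independent sets that behave atomically with respect to adjacency (two copies are either fully joined or fully non-adjacent according to whether $x\sim_X x'$).

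Third, I would substitute Lemma~\ref{lem:max-coclique}, which gives $\alpha(\Gamma_{m,n}^k(k))=\max\{k,n\}$. Putting the pieces together yields
\begin{equation*}
\alpha\bigl(\Gamma_{m,n}^k(r)\bigr) \;=\; \frac{r}{k}\cdot \max\{k,n\} \;=\; \max\Bigl\{r,\,\tfrac{nr}{k}\Bigr\},
\end{equation*}
as required. There is no real obstacle here; the only thing one might worry about is making the lower bound matching the upper bound explicit, but this is immediate from the two natural cocliques already exhibited in the proof of Lemma~\ref{lem:max-coclique}, namely $U_{1,1}(r)$ (of size $r$) and $\bigl\{(a,1,c):a\in P_1,\ c\in[n]\bigr\}$ (of size $\tfrac{nr}{k}$), each of which is a coclique of $\Gamma_{m,n}^k(r)$ by direct inspection of the edge rule.
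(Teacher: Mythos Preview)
Your proposal is correct and follows essentially the same route as the paper: the paper's one-line proof simply invokes the identity $\alpha\bigl(\Gamma_{m,n}^k(k)[\overline{K_{r/k}}]\bigr)=\alpha(\Gamma_{m,n}^k(k))\cdot\alpha(\overline{K_{r/k}})$, which together with the preceding isomorphism corollary and Lemma~\ref{lem:max-coclique} gives the result. You also correctly identify that the statement as printed contains a typo and should refer to $\Gamma_{m,n}^k(r)$ rather than $\Gamma_{m,n}^k(k)$.
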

	\begin{proof}
		The result follows from the fact that $\alpha\left( \Gamma_{m,n}^k(k) \left[\overline{K}_{\frac{r}{k}}\right] \right) = \alpha(\Gamma_{m,n}^k(k))\alpha \left(\overline{K_{\frac{r}{k}}}\right)$.
	\end{proof}
	
	Now, we define another graph which is similar to $\Gamma_{m,n}^k(r)$ by introducing a set of permutations $\Sigma$. Let $\pi = \{ P_1,P_2,\ldots,P_k \}$ be any uniform partition of $[r]$ into $k$ parts. For any $b,b^\prime \in [m]$ and $c,c^\prime \in [n]$, let  $\sigma_{b,b^\prime}^{c,c^\prime} \in \sym(k)$ be a permutation depending only on $b,b^\prime,c,c^\prime$, and define the multiset of permutations 
	\begin{align*}
		\Sigma = \left\{ \sigma_{b,b^\prime}^{c,c^\prime}: b,b^\prime \in [m] ,\ c,c^\prime \in [n] \right\}.
	\end{align*}
	 Now, we define the graph $\Gamma_{m,n}^{k,\Sigma}(r)$ to be the graph whose vertex set is 
	 \begin{align*}
	 	V = \left\{ (a,b,c): a\in [r], b\in [m],c\in [n]\right\}.
	 \end{align*}
	 Two elements $(a,b,c),(a^\prime,b^\prime,c^\prime) \in V$ are adjacent if and only if
	 \begin{align*}
	 	\begin{cases}
	 		c = c^\prime \mbox{ and } b\neq b^\prime, \mbox{ or }\\
	 		c\neq c^\prime \mbox{ and $a \in P_i$ and $a^\prime \in P_j$ such that $\sigma_{b,b^\prime}^{c,c^\prime}(i) \neq j$.} 
	 	\end{cases}
	 \end{align*}
	 If the $\sigma_{b,b^\prime}^{c,c^\prime}$ is the identity map for all $b,b^\prime \in [m]$ and $c,c^\prime \in [n]$, then $\Gamma_{m,n}^{k,\Sigma}(r) = \Gamma_{m,n}^k(r)$.
	 
	The graphs $\Gamma_{m,n}^k(r)$ and $\Gamma_{m,n}^{k,\Sigma}(r)$ are not necessarily isomorphic, but they are locally isomorphic in the sense that for all $b,b^\prime \in [m]$ and $c,c^\prime \in [n]$, the subgraph induced by $\{ (a,b,c): a\in [r] \} \cup \{ (a,b^\prime,c^\prime): a\in [r] \}$ are isomorphic. A direct consequence of this is that the independence number of $\Gamma_{m,n}^k(r)$ and $\Gamma_{m,n}^{k,\Sigma}(r)$ are equal. In fact, we have
	\begin{align*}
		\Gamma_{m,n}^{k,\Sigma}(r) \cong \Gamma_{m,n}^k(k) \left[ \overline{K_{\frac{r}{k}}} \right].
	\end{align*}

	 We omit the proof of the next lemma since it is similar to that of Lemma~\ref{lem:max-coclique} and its corollary. 
	
		\begin{lem}
		The independence number of $\Gamma_{m,n}^{k,\Sigma}(r)$ is equal to $\max\{ r,\frac{rn}{k} \}$.\label{lem:max-coclique-gen}
	\end{lem}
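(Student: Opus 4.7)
The plan is to mirror the argument of Lemma~\ref{lem:max-coclique} and its corollary, exploiting the key fact that each $\sigma_{b,b^\prime}^{c,c^\prime}$ is a bijection on $[k]$. Let $\mathcal{F}$ be a coclique of $\Gamma_{m,n}^{k,\Sigma}(r)$. Since each $U_{b,c}(r)$ is a coclique of size $r$ (within a single $(b,c)$-slice no edges can arise from either adjacency clause), we immediately have $\alpha(\Gamma_{m,n}^{k,\Sigma}(r)) \geq r$.

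For the upper bound, decompose $\mathcal{F} = \mathcal{F}_1 \cup \cdots \cup \mathcal{F}_n$ with $\mathcal{F}_i := \mathcal{F} \cap \bigcup_{b \in [m]} U_{b,i}(r)$. Within a single $c$-slice, any two vertices with distinct $b$-coordinates are adjacent, so each non-empty $\mathcal{F}_i$ is contained in a unique set $U_{b_i,i}(r)$. Let $S_i \subseteq [k]$ be the set of indices of parts $P_j$ of $\pi$ meeting $\mathcal{F}_i$. Suppose $\mathcal{F}_i, \mathcal{F}_{i'}\neq\varnothing$ for some $i\neq i'$. Then for every $(j,j')\in S_i\times S_{i'}$, the non-adjacency condition between the corresponding vertices of $\mathcal{F}_i$ and $\mathcal{F}_{i'}$ forces $\sigma_{b_i,b_{i'}}^{i,i'}(j) = j'$. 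Since $\sigma_{b_i,b_{i'}}^{i,i'}$ is both a function and an injection, this single chain of equalities collapses $|S_i|=|S_{i'}|=1$. Iterating this reasoning over every $i''$ with $\mathcal{F}_{i''}\neq\varnothing$ shows that every non-empty $\mathcal{F}_{i''}$ lies in a single part of $\pi$ of size $r/k$. Consequently $|\mathcal{F}| \leq n\cdot r/k = rn/k$ in this case, while the remaining case where at most one $\mathcal{F}_i$ is non-empty gives $|\mathcal{F}|\leq r$. Hence $\alpha(\Gamma_{m,n}^{k,\Sigma}(r))\leq \max\{r, rn/k\}$.

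For the matching lower bound, $U_{1,1}(r)$ already witnesses size $r$, and the isomorphism $\Gamma_{m,n}^{k,\Sigma}(r)\cong \Gamma_{m,n}^k(k)[\overline{K_{r/k}}]$ noted just before the lemma combined with the standard lexicographic-product formula gives
\[
\alpha(\Gamma_{m,n}^{k,\Sigma}(r)) = \alpha(\Gamma_{m,n}^k(k))\cdot\alpha(\overline{K_{r/k}}) = \max\{k,n\}\cdot\frac{r}{k} = \max\left\{r,\frac{rn}{k}\right\},
\]
which completes the proof.

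The main obstacle is to appreciate how restrictive the permutation condition becomes across distinct $c$-slices: superficially $|S_i|$ could be large, but the requirement that $\sigma_{b_i,b_{i'}}^{i,i'}(j)=j'$ holds simultaneously for every $(j,j')\in S_i\times S_{i'}$ is incompatible with $\sigma$ being a bijection unless both $|S_i|$ and $|S_{i'}|$ equal $1$. Once this collapse is identified the argument reduces to routine bookkeeping along the lines of Lemma~\ref{lem:max-coclique}.
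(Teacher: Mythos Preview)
Your proof is correct and follows essentially the same approach as the paper: the paper omits the proof, pointing to Lemma~\ref{lem:max-coclique} and its corollary together with the isomorphism $\Gamma_{m,n}^{k,\Sigma}(r)\cong \Gamma_{m,n}^k(k)[\overline{K_{r/k}}]$ stated just before the lemma, and your final paragraph carries out exactly this computation. Your preceding direct upper-bound argument (collapsing $|S_i|$ via the bijectivity of $\sigma_{b_i,b_{i'}}^{i,i'}$) is a valid self-contained alternative that does not rely on the isomorphism, though it becomes redundant once you invoke the lexicographic-product formula.
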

	
	We end this section by stating the No-Homomorphism Lemma. We recall that a homomorphism between a graph $X$ and a graph $Y$ is a map from the vertex set of $X$ to the vertex set of $Y$ which maps an edge to an edge.
	\begin{lem}[No-Homomorphism Lemma \cite{albertson1985homomorphisms}]
		Let $X$ be a graph and $Y$ be a vertex-transitive graph. If there is a graph homomorphism from $X$ to $Y$, then 
		\begin{align*}
			\frac{\alpha(Y)}{|V(Y)|} \leq \frac{\alpha(X)}{|V(X)|}.
		\end{align*} 
	\end{lem}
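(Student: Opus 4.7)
My plan is to prove this by a standard averaging argument over a transitive subgroup of the automorphism group of $Y$. Let $f\colon V(X)\to V(Y)$ be the hypothesised graph homomorphism, fix a maximum coclique $S\subseteq V(Y)$ so that $|S|=\alpha(Y)$, and let $\Gamma\leq \Aut(Y)$ be any subgroup acting transitively on $V(Y)$; such a $\Gamma$ exists by the vertex-transitivity of $Y$. For each $g\in \Gamma$, the image $g(S)$ is again a maximum coclique of $Y$, and I claim that its preimage $f^{-1}(g(S))$ is a coclique in $X$: indeed, if two adjacent vertices $x\sim_X x^\prime$ were both mapped into $g(S)$, then $f(x)\sim_Y f(x^\prime)$ by the homomorphism property, contradicting the fact that $g(S)$ is an independent set in $Y$. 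Consequently $|f^{-1}(g(S))|\leq \alpha(X)$ for every $g\in \Gamma$.

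The next step is to average this inequality over $\Gamma$ by double-counting the pairs $(g,x)$ with $f(x)\in g(S)$:
\[
\sum_{g\in \Gamma} |f^{-1}(g(S))| \;=\; \sum_{x\in V(X)} \bigl|\{g\in \Gamma : f(x)\in g(S)\}\bigr|.
\]
For the inner count I would use transitivity of $\Gamma$ together with the orbit-stabilizer theorem. For any fixed $y\in V(Y)$ and any $s\in V(Y)$, the number of $g\in \Gamma$ with $g(s)=y$ equals $|\Gamma_s|=|\Gamma|/|V(Y)|$, because $\Gamma$ is transitive and all point stabilizers have the same size. Summing over $s\in S$, the inner count equals $\alpha(Y)|\Gamma|/|V(Y)|$ for every $y$, independent of the base point. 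Combining with the per-$g$ bound gives
\[
|\Gamma|\,\alpha(X) \;\geq\; \sum_{g\in \Gamma} |f^{-1}(g(S))| \;=\; |V(X)|\,\alpha(Y)\,\frac{|\Gamma|}{|V(Y)|},
\]
and dividing by $|\Gamma|$ yields the desired inequality $\alpha(Y)/|V(Y)|\leq \alpha(X)/|V(X)|$.

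I do not expect any real obstacle here; the only delicate point is the double-counting step, where vertex-transitivity of $Y$ is used precisely to make the inner sum independent of $y$. Without that hypothesis, the same manipulation would only give an upper bound controlled by the distribution of $S$ under $\Aut(Y)$, which could be strictly weaker than $\alpha(Y)/|V(Y)|$, explaining why vertex-transitivity of the target graph cannot be dropped.
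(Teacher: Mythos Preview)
Your argument is correct: the averaging over a transitive subgroup $\Gamma\leq\Aut(Y)$ together with the orbit--stabilizer count is the standard proof of this lemma, and every step is sound. Note, however, that the paper does not supply its own proof of this statement; it merely quotes the lemma with a citation to Albertson and Collins, so there is nothing in the paper to compare your approach against.
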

	
		\section{Derangement graphs}\label{sect:der-graph}
	Let $H$ be a group and $C\subset H\setminus \{1\}$. Recall that the \itbf{Cayley digraph} $\operatorname{Cay}(H,C)$ is the digraph with vertex set equal to $H$, and for any $x,y\in H$, $(x,y)$ is an arc if and only if $yx^{-1}\in C$. If $x^{-1} \in C$ whenever $x\in C$, then $\operatorname{Cay}(H,C)$ is a simple undirected graph. It is not hard to see that $\operatorname{Cay}(H,C)$ is vertex-transitive since its automorphism group contains a regular subgroup given by the right-regular-representation of $H$. 
	
	Now, let $G\leq \sym(\Omega)$ be a transitive group. Recall that a \itbf{derangement} of $G$ is a fixed-point-free permutation. A famous result of Jordan asserts that a finite transitive group always admits a derangement \cite{jordan1872recherches}. Let $\operatorname{Der}(G)$ be the set of all derangements of $G$. The \itbf{derangement graph} of $G$ is the Cayley graph $\Gamma_{G}:=\operatorname{Cay}(G,\operatorname{Der}(G))$.
	
	The derangement graph is important in the analysis of the intersection density of the transitive group $G\leq \sym(\Omega)$. Indeed, if $\mathcal{F}\subset G$ is intersecting, then for any $g,h\in \mathcal{F}$, $hg^{-1}$ fixes an element of $\Omega$. Hence, $hg^{-1}$ is not in $\operatorname{Der}(G)$, and thus $g$ and $h$ are not adjacent in $\Gamma_{G}$. In other words, $\mathcal{F}$ is a coclique of $\Gamma_{G}$. Conversely, if $\mathcal{F}$ is a coclique of $\Gamma_{G}$, then any two permutations $g,h\in \mathcal{F}$ are such that $hg^{-1}\not\in \operatorname{Der}(G)$, i.e., $g$ and $h$ agree on some element of $\Omega$. Therefore, we conclude that $\mathcal{F}\subset G$ is intersecting if and only if $\mathcal{F}$ is a coclique in $\Gamma_{G}$. From this correspondence, we derive that
	\begin{align*}
		\rho(G) = \frac{\alpha(\Gamma_{G})}{|G_\omega|}.
	\end{align*}
	
	Since the derangement graph is regular and vertex transitive, we can use various techniques to get an upper bound on the independence number of $\Gamma_{G}$. The next result will be useful to the results in this paper.
	\begin{lem}[Clique-coclique bound \cite{godsil2016erdos}]
		Let $X = (V,E)$ be a vertex-transitive graph. Then, 
		\begin{align*}
			\alpha(X)\omega(X)\leq |V|.
		\end{align*}
		Moreover, if equality holds, then for any coclique of maximum size $S$ and for any clique of maximum size $T$, we have $|S\cap T| = 1$.%
	\end{lem}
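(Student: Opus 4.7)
The plan is to prove both parts simultaneously through a standard double-counting argument, leveraging the fact that vertex-transitivity gives us a group $\Gamma := \Aut(X)$ acting transitively on $V$.

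Fix a maximum coclique $S$ with $|S|=\alpha(X)$ and a maximum clique $T$ with $|T|=\omega(X)$. For each $g \in \Gamma$, the image $T^g$ is again a clique of size $\omega(X)$, so $|S \cap T^g| \leq 1$ since a coclique and a clique share at most one vertex. Summing over all $g \in \Gamma$ gives the upper bound
\begin{align*}
\sum_{g \in \Gamma} |S \cap T^g| \leq |\Gamma|.
\end{align*}

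For the other direction, I would rewrite the sum by swapping the order of summation:
\begin{align*}
\sum_{g \in \Gamma} |S \cap T^g| = \sum_{v \in T} |\{g \in \Gamma : v^g \in S\}| = \sum_{v \in T}\sum_{s \in S}|\{g \in \Gamma : v^g = s\}|.
\end{align*}
Since $\Gamma$ is transitive on $V$, each inner set has size $|\Gamma_v| = |\Gamma|/|V|$ by orbit-stabilizer. Therefore the sum equals $|T| \cdot |S| \cdot |\Gamma|/|V|$. Combining with the upper bound and cancelling $|\Gamma|$ yields $\alpha(X)\omega(X) = |S||T| \leq |V|$.

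For the equality case, if $\alpha(X)\omega(X) = |V|$, then the chain of inequalities above collapses, forcing $|S \cap T^g| = 1$ for every $g \in \Gamma$. Specializing to $g = \mathrm{id}$ gives $|S \cap T| = 1$. The same argument works for any maximum coclique $S$ and any maximum clique $T$, since the choice of $S$ and $T$ was arbitrary. The only mild subtlety is to confirm that every transitive subgroup of $\Aut(X)$ would do, but since vertex-transitivity by definition supplies such a group, there is no real obstacle here — the proof is essentially a one-line double count after the right setup.
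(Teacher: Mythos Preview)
Your argument is correct and is the standard double-counting proof of the clique--coclique bound. Note that the paper does not supply its own proof of this lemma: it is simply quoted from \cite{godsil2016erdos} as a known result, so there is nothing to compare against beyond observing that your proof is the usual one found in that reference.
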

	We derive the following corollary from this lemma.
	\begin{cor}
		If $G\leq \sym(\Omega)$ is transitive, then $\rho(G)\leq \frac{|\Omega|}{\omega(\Gamma_{G})}$. In particular, if $\Gamma_{G}$ has a clique of size $|\Omega|$, then $\rho(G) = 1$.\label{cor:clique-coclique}
	\end{cor}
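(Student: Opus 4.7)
The plan is to combine the two identities we already have on the table: the formula $\rho(G)=\alpha(\Gamma_G)/|G_\omega|$ derived just above the corollary, and the clique-coclique bound applied to the vertex-transitive graph $\Gamma_G$. The derangement graph $\Gamma_G$ is a Cayley graph on $G$, hence vertex-transitive, so the clique-coclique lemma gives $\alpha(\Gamma_G)\,\omega(\Gamma_G)\leq |V(\Gamma_G)|=|G|$.

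Dividing by $|G_\omega|\,\omega(\Gamma_G)$ and using transitivity of $G$ on $\Omega$, which yields $|G|=|\Omega|\,|G_\omega|$ via the orbit-stabilizer theorem, I obtain
\begin{align*}
\rho(G)\;=\;\frac{\alpha(\Gamma_G)}{|G_\omega|}\;\leq\;\frac{|G|}{|G_\omega|\,\omega(\Gamma_G)}\;=\;\frac{|\Omega|}{\omega(\Gamma_G)},
\end{align*}
which is the first assertion.

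For the second assertion, suppose $\Gamma_G$ contains a clique of size $|\Omega|$, so $\omega(\Gamma_G)\geq |\Omega|$. Substituting into the bound just proved gives $\rho(G)\leq 1$. Since $G_\omega$ is itself intersecting (any two of its elements fix $\omega$), the definition of $\rho(G)$ forces $\rho(G)\geq 1$, and equality $\rho(G)=1$ follows. There is no serious obstacle here; the whole argument is a one-line chain of inequalities, and the only thing to be careful about is correctly invoking orbit-stabilizer to rewrite $|G|/|G_\omega|$ as $|\Omega|$.
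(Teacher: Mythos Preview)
Your proof is correct and follows exactly the intended derivation: apply the clique--coclique bound to the vertex-transitive graph $\Gamma_G$, divide through using $\rho(G)=\alpha(\Gamma_G)/|G_\omega|$ and orbit--stabilizer, and then combine with $\rho(G)\geq 1$ for the second claim. The paper states this corollary without proof as an immediate consequence of the clique--coclique lemma, and what you have written is precisely that immediate consequence made explicit.
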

	\begin{cor}
		If $G\leq \sym(\Omega)$ is transitive of degree $3p$, where $p\geq 5$ is an odd prime, then $\rho(G)\leq 3$.\label{cor:upper-bound-density}
	\end{cor}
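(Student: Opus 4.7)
By Corollary~\ref{cor:clique-coclique}, $\rho(G) \leq 3p/\omega(\Gamma_G)$, so it suffices to produce a clique of size $p$ in $\Gamma_G$; equivalently, a cyclic subgroup of $G$ of order $p$ acting semi-regularly on $\Omega$. If $G$ is primitive, the cited result from \cite{razafimahatratra2021intersection} already gives $\rho(G)=1\leq 3$, so I would assume that $G$ is imprimitive, with a non-trivial $G$-invariant partition $\mathcal{B}$ whose blocks have size $3$ or $p$ (the only non-trivial divisors of $3p$).

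In the case where the blocks have size $3$, $\overline{G}$ is transitive of degree $p$, and Cauchy's theorem supplies $\overline{h}\in\overline{G}$ of order $p$. Any lift $h\in G$ of $\overline{h}$ permutes the blocks in a single $p$-cycle, so every cycle of $h$ on $\Omega$ has length divisible by $p$. Since the only partitions of $3p$ into parts divisible by $p$ are $p^3$, $p+2p$, and $3p$, a short computation shows that $h$, $h^2$, or $h^3$ respectively has cycle type $p^3$ and is therefore semi-regular of order $p$.

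In the case where the blocks have size $p$, I would work with $K = \ker(G\to \overline{G})$. Since the orbits of $K$ refine $\mathcal{B}$ and $K=1$ would force $|G|\leq 6 < 3p$, the subgroup $K$ must be transitive on each block. Let $P$ be a Sylow $p$-subgroup of $K$; as $p\geq 5$ does not divide $[G:K]\leq 6$, $P$ is also a Sylow $p$-subgroup of $G$. Let $N_i \trianglelefteq K$ denote the kernel of the restriction of $K$ to block $B_i$ and set $L_i := P\cap N_i$. Transitivity of $K$ on the prime-sized $B_i$ forces $[P:L_i]=p$, and faithfulness of $K$ on $\Omega$ gives $L_1\cap L_2\cap L_3 = \{1\}$. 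For $p\geq 5$, the crude bound $|L_1\cup L_2\cup L_3|\leq 3|P|/p < |P|$ provides $g\in P\setminus \bigcup_i L_i$; then $g^p \in L_1\cap L_2\cap L_3 = \{1\}$ forces $o(g)=p$, and the fact that $g$ acts non-trivially on each block forces $g|_{B_i}$ to be a $p$-cycle, making $g$ semi-regular on $\Omega$.

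The main obstacle is the blocks-of-size-$p$ case, where no direct Cauchy-and-lift argument applies: the semi-regular element must be found by an intersection/counting argument inside a Sylow $p$-subgroup, and the strict inequality $3|P|/p < |P|$ is precisely where the hypothesis $p\geq 5$ enters.
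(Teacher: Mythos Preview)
Your argument is correct, but the paper takes a much shorter route: it simply invokes Maru\v{s}i\v{c}'s theorem \cite{maruvsivc1981vertex} that every transitive group of degree $mq$, with $q$ prime and $m\leq q$, contains a $(q,m)$-semi-regular element, and then applies Corollary~\ref{cor:clique-coclique} directly with a clique of size $p$. Your case analysis is essentially a hands-on reproof of the $m=3$ instance of Maru\v{s}i\v{c}'s result. The payoff of your approach is that it is largely self-contained and makes explicit where the hypothesis $p\geq 5$ is genuinely used (the strict inequality $3|P|/p<|P|$ in the blocks-of-size-$p$ case); the cost is the detour through three separate cases and the somewhat heavy-handed appeal to the primitive classification result $\rho(G)=1$ from \cite{razafimahatratra2021intersection} merely to obtain $\rho(G)\leq 3$. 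The paper's citation handles primitive and imprimitive groups uniformly in one line.
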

	\begin{proof}
		In \cite{maruvsivc1981vertex} Maru\v{s}i\v{c} showed that any transitive group of degree $mq$, where $q$ is a prime and $m\leq q$,  admits a semi-regular element of order $q$. Let $g\in G$ be a semi-regular element of order $p$. The subgroup $\langle g\rangle$ is a clique of size $p$ in the derangement graph $\Gamma_{G}$. By Corollary~\ref{cor:clique-coclique}, we have $\rho(G)\leq \frac{3p}{p} = 3.$
	\end{proof}
	
	\section{Analysis of the intersection density of groups of degree $3p$}\label{sect:analysis}

	Let $G\leq \sym(\Omega)$ be transitive and $|\Omega| = 3p$, where $p\geq 5$ is an odd prime. We recall the following lemma.
	\begin{lem}[\cite{hujdurovic2022intersection}]
		Let $G\leq \sym(\Omega)$ be a transitive group. If $\mathcal{B}$ is a $G$-invariant partition of $G$ and $H\leq G$ is a semi-regular subgroup whose orbits-partition is equal to $\mathcal{B}$, then $\rho(G) \leq \rho(\overline{G})$.\label{lem:quotient}
	\end{lem}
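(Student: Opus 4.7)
The plan is to bound $\rho(G)=\alpha(\Gamma_G)/|G_\omega|$ by pushing a maximum intersecting set $\mathcal{F}\subseteq G$ down to $\overline{G}$ via the canonical projection $\phi\colon G\to\overline{G}$ and using $H$ to control the size of each fiber. Set $K=\ker(G\to\overline{G})$, so $\overline{G}\cong G/K$. Because every orbit of $H$ is a block of $\mathcal{B}$, the subgroup $H$ preserves each block setwise, and therefore $H\le K$.

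First I would verify that $\phi(\mathcal{F})$ is intersecting in $\overline{G}$: if $g_1,g_2\in\mathcal{F}$ agree on $\omega\in\Omega$, then $\overline{g_1}$ and $\overline{g_2}$ both send the block $B\ni\omega$ to the block containing $\omega^{g_1}=\omega^{g_2}$, so they agree on $B\in\mathcal{B}$. Hence $|\phi(\mathcal{F})|\le \alpha(\Gamma_{\overline{G}})=\rho(\overline{G})\,|\overline{G}_B|$. Next I would bound $|\mathcal{F}\cap Kg|$ for each fiber $Kg$ of $\phi$. The key observation is that for every $g\in G$ the right coset $Hg$ is a clique of size $|H|$ in $\Gamma_G$: since $H$ is semi-regular, every $h\in H\setminus\{1\}$ is a derangement, so for distinct $h_1,h_2\in H$ the element $(h_1g)(h_2g)^{-1}=h_1h_2^{-1}$ lies in $\operatorname{Der}(G)$. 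Consequently $|\mathcal{F}\cap Hg|\le 1$, and because $H\le K$ the coset $Kg$ decomposes into $|K|/|H|$ right $H$-cosets, giving $|\mathcal{F}\cap Kg|\le |K|/|H|$.

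Combining the two estimates and summing over the fibers of $\phi$ produces
\[
|\mathcal{F}|\;\le\;|\phi(\mathcal{F})|\cdot\frac{|K|}{|H|}\;\le\;\rho(\overline{G})\,|\overline{G}_B|\cdot\frac{|K|}{|H|}.
\]
A short index computation then closes the argument: since $\overline{G}$ is transitive on $\mathcal{B}$ we have $|\overline{G}_B|=|G|/(|\mathcal{B}|\,|K|)$, and since $H$ acts regularly on each block we have $|B|=|H|$ and $|\Omega|=|\mathcal{B}|\,|H|$, so $|G_\omega|=|G|/(|\mathcal{B}|\,|H|)$. Therefore $|\overline{G}_B|\cdot|K|/|H|=|G_\omega|$, and dividing through by $|G_\omega|$ yields $\rho(G)\le\rho(\overline{G})$.

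The one delicate point I foresee is the left/right coset bookkeeping: with the Cayley-graph edge rule $\{g,h\}\in E(\Gamma_G)\iff hg^{-1}\in\operatorname{Der}(G)$, the clique through $g$ produced by $H$ is the right coset $Hg$ rather than $gH$, and the partition of the fiber $Kg$ into $H$-cosets must accordingly be taken by right cosets. Once the conventions are aligned, every remaining step is a short verification.
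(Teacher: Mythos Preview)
Your argument is correct. Note, however, that the paper does not supply its own proof of this lemma: it is quoted from \cite{hujdurovic2022intersection} and used as a black box. So there is nothing to compare against in the present paper; your proof stands on its own merits.

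For the record, the approach you take---project a maximum intersecting set along $\phi\colon G\to\overline{G}$, bound the image by $\rho(\overline{G})\,|\overline{G}_B|$, and bound each fiber by $|K|/|H|$ using that every right coset of the semi-regular subgroup $H$ is a clique in $\Gamma_G$---is exactly the standard proof of this fact, and each step you outline is sound. The index identity $|\overline{G}_B|\cdot|K|/|H|=|G_\omega|$ follows precisely as you say, since the orbits of $H$ coincide with the blocks and $H$ acts regularly on each one, so $|H|=|B|$. Your caution about left versus right cosets is well placed and handled correctly: with the convention $g\sim h \iff hg^{-1}\in\operatorname{Der}(G)$, it is indeed the right cosets $Hg$ that form cliques, and since $H\le K$ the fiber $Kg$ is a union of $[K:H]$ such right cosets.
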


	If $G$ is primitive, then $\rho(G) = 1$ by \cite{razafimahatratra2021intersection}, so we may assume that $G$ is imprimitive. As $G$ is imprimitive of degree $3p$, it admits only blocks of size $3$ or $p$. If $G$ admits a block of size $p$ from a $G$-invariant partition $\mathcal{B}$ of $\Omega$, then we can also show that $\rho(G) = 1$. Indeed, Maru\v{s}i\v{c} showed in \cite{maruvsivc1981vertex} that a transitive group of degree $mp$, for $m\leq p$, admits a semi-regular element of order $p$. If $H$ is the semi-regular subgroup obtained from such a semi-regular element, it is straightforward to see that the set of orbits of $H$ must be equal to $\mathcal{B}$. By Lemma~\ref{lem:quotient}, we conclude that $\rho(G) \leq \rho(\overline{G}) = 1$, since $\overline{G}$ is transitive of prime degree.
	
	If $G$ admits at least two non-trivial $G$-invariant partitions, then $\rho(G) = 1$ by a result in \cite[Section~3]{razafimahatratra2021intersection}. Therefore, we may assume that $G$ admits a unique non-trivial $G$-invariant partition, 
	\begin{align}
		\mathcal{B} = \{ B_1,B_2,\ldots,B_p \}\label{eq:blocks}
	\end{align}
	where the blocks are $B_i = \{x_i,y_i,z_i\}$, for any $1\leq i\leq p$. We may distinguish two cases from hereon in our analysis.
	
	\subsection{The Quasi-primitive case}
	If $G\leq \sym(\Omega)$ is quasiprimitive (i.e., all of its non-trivial normal subgroups are transitive), then $\ker(G \to \overline{G})$ is trivial, since it cannot be transitive. Conversely, assume that $\ker(G\to \overline{G})$ is trivial. For any non-trivial normal subgroup $N \trianglelefteq G$, the orbits of $N$ form a $G$-invariant partition of $\Omega$ \cite[Theorem 1.6A]{dixon1996permutation} and so the orbits-partition of $N$ is either trivial or equal to $\mathcal{B}$. Since $N \neq 1$, the orbits-partition of $N$ cannot be $\{\{\omega\} : \omega \in \Omega\}$. Similarly, the orbits-partition of $N$ cannot be equal to  $\mathcal{B}$ since $N\neq 1$ and $N\leq \ker(G\to \overline{G})$ is trivial. Therefore, the orbits-partition of $N$ is equal to $\{\Omega\}$, and so $N$ is transitive.
	
	Thus, we have proved that $\ker(G\to \overline{G})$ is trivial if and only if $G$ is quasiprimitive. In this case, $G \cong \overline{G}$ and $G$ admits two faithful actions of degree $3p$ and $p$. Let $\omega \in \Omega$ and assume that $\omega \in B$, for some $B\in \mathcal{B}$. Let $G_{\{B\}}$ be the setwise stabilizer of the set $B$ in $G$. Note that $G_{\{B\}} = \overline{G}_{B}$ and if $g\in G_\omega$, then $g \in G_{\{B\}}$. Therefore,  we conclude that $G_\omega \leq G_{\{B\}}$. Hence, $G$ admits two actions, which are permutation equivalent to the actions of $G$ on cosets of $G_{\{B\}}$ (primitive) and on cosets of $G_\omega$ (imprimitive and quasiprimitive).
	
	Using the classification of transitive groups of prime degree, it is not hard to show (see \cite{behajaina2022intersection}) that $G$ is almost simple, and
	\begin{align*}
		\psl{n}{q} \leq G \leq \pgammal{n}{q}
	\end{align*}
	where $n\geq 1$ is an integer, $p = \frac{q^n-1}{q-1}$, and the point-stabilizer of $\psl{n}{q}$ in its action on the projective space $\pg{n-1}{q}$ admits a subgroup of index $3$.
	
	\subsection{The Genuinely imprimitive case}
	Assume that $N\trianglelefteq G$ is non-trivial and intransitive. As $N\trianglelefteq G$, its orbits-partition is a $G$-invariant partition of $G$, and since it is non-trivial and non-transitive, its orbits-partition is equal to $\mathcal{B}$. Consequently, $N\leq \ker(G\to \overline{G})$.
	
	If $N\trianglelefteq G$ is a minimal-normal subgroup of $G$, then either $N$ is transitive or it is intransitive and therefore contained in $\ker(G\to \overline{G})$. As $G$ is not quasiprimitive, note that there is always a minimal normal subgroup of $G$ contained in $\ker(G\to \overline{G})$. 
	
	Let $N\trianglelefteq G$ be a minimal normal subgroup of $G$ such that $N\leq \ker(G\to \overline{G})$. By \cite[Theorem~4.3A]{dixon1996permutation}, $N = T_1\times T_2\times \ldots \times T_k$, where $k\geq 1$ is a positive integer, $(T_i)_{i=1,\ldots,m}$ are simple normal subgroups of $N$ and  are conjugate in $G$. We may distinguish the cases where one of the factors (and therefore all, by conjugation) is abelian or not.
	\begin{itemize}
		\item If $T_1$ is non-abelian (and therefore all other $T_i$, for $1\leq i\leq k$), then it was proved in \cite{behajaina2022intersection} that $\rho(G) = 1$.
		\item If $T_1$ is abelian, then $N$ is an elementary abelian $3$-group \cite{behajaina2022intersection}. It was also proved in \cite{behajaina2022intersection} that if $\ker(G\to \overline{G})$ contains a derangement, then $\rho(G) = 1$. Consequently, the only transitive groups $G\leq \sym(\Omega)$ of interests in this case are those with the property that $\ker(G\to \overline{G})$ is derangement-free.
	\end{itemize}
	
	Consequently, we make the following assumption in the remainder of the paper.
	\begin{ass}
		Let $G\leq \sym(\Omega)$ be a transitive group of degree $3p$ admitting the $G$-invariant partition defined in \eqref{eq:blocks} as its only $G$-invariant partition. Assume that $\ker(G\to \overline{G})\neq 1$ is derangement-free and that any minimal normal subgroup of $G$ contained in $\ker(G \to \overline{G})$ is an elementary abelian $3$-group.\label{assumption}
	\end{ass}
	
	In the next sections, we will analyze the possible cases for the intersection density under Assumption~\ref{assumption}.
	
	\section{the solvable case}\label{sect:solvable}
	Let $G\leq \sym(\Omega)$ be a transitive group satisfying Assumption~\ref{assumption}. As $\overline{G}\leq \sym(\mathcal{B})$ is transitive of degree $p$, it is either $2$-transitive or solvable, and thus a proper subgroup of $\agl{1}{p}$. We assume further that $\overline{G}$ is solvable.
	
	Under these assumptions, we have that $\overline{G} = \langle \alpha\rangle \rtimes \langle \beta \rangle$, where $o(\alpha) = p$ and $o(\beta) = d \mid (p-1)$. Moreover, $\beta$ fixes $B_1$ and acts as a product of $\frac{p-1}{d}$ many $d$-cycles on $\mathcal{B} \setminus \{B_1\}$. In other words, $\overline{G}$ is a Frobenius group.
	
	Let $a\in G$ such that $\overline{a} = \alpha$. A result of Maru\v{s}i\v{c} in \cite{maruvsivc1981vertex} shows that the group $G$ of degree $3p$ always has a semi-regular element of order $p$. That is, we may assume that $a$ is a product of $3$ cycles of length $p$. 
	In other words, $o(a) = o(\alpha) = p$. Therefore, $$\left\langle\ker(G\to \overline{G}),a\right\rangle = \ker(G\to \overline{G})\langle a\rangle = \ker(G\to \overline{G}) \rtimes \langle a \rangle$$
	is a transitive group where $\ker(G\to\overline{G})$ is intersecting, since it is derangement-free \cite{meagher180triangles}. As the intersection density of a transitive subgroup of $G$ is at most $3$ by Corollary~\ref{cor:upper-bound-density}, it is not hard to see that the intersecting density of $\ker(G\to \overline{G}) \rtimes \langle \alpha\rangle$ is exactly equal to $3$. Indeed, $\ker(G\to \overline{G})$ is an intersecting set of size $\frac{|\ker(G\to \overline{G}) \rtimes \langle \alpha\rangle|}{p}$.

	Next, let $b\in G$ such that $\overline{b} = \beta$. Since $\overline{b} = \beta$, then $o(\beta)\mid o(b)$, so there exists a positive integer $r\geq 1$ such that $o(b)= dr$. Also, since $\overline{G} = \langle \alpha\rangle \rtimes \langle \beta\rangle$, there exists $t\in \{0,1,\ldots,p-1\}$ such that $\gcd(t,p) = 1$ and $\beta \alpha\beta^{-1} = \alpha^{t}$. Consequently, we have $\overline{bab^{-1}} = \overline{a^t}$, so there exists $h\in \ker(G\to \overline{G})$ such that
	\begin{align}
		bab^{-1} = ha^t.\label{eq:conj}
	\end{align}
	
	Define the group
	\begin{align}
		G(a,b) := \langle \ker(G\to \overline{G}),a,b \rangle.\label{second-group}
	\end{align}
	Since $\overline{G} = \langle \alpha \rangle \rtimes \langle \beta \rangle$, for any $g\in G$, there exists $i\in \{0,1,\ldots,p-1\}$ and $j\in \{0,1,\ldots,d-1\}$ such that $\overline{g} = \alpha^i\beta^j = \overline{a^ib^j}.$ Therefore, there exists $k\in \ker(G\to \overline{G})$ such that $g = ka^ib^j\in  \langle\ker(G\to \overline{G}),a,b\rangle = G(a,b)$. In other words, $G = G(a,b)$.
	\begin{prop}
		If $o(b) = rd$, then $\langle \ker(G\to \overline{G}),a \rangle \cap \langle b \rangle = \langle b^d\rangle$. In particular, if $o(b) = d$, then $G(a,b)  = \left(\ker(G\to \overline{G}) \rtimes \langle a \rangle\right) \rtimes \langle b\rangle. $
	\end{prop}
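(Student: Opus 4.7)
The plan is to set $K := \ker(G\to\overline{G})$ and push everything through the quotient homomorphism $\pi : G \to \overline{G}$, which sends $a \mapsto \alpha$ and $b \mapsto \beta$. The key structural fact to exploit is that $\overline{G} = \langle\alpha\rangle \rtimes \langle\beta\rangle$ is a Frobenius group, so in particular $\langle\alpha\rangle \cap \langle\beta\rangle = 1$.

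First I would verify that $K \cap \langle a\rangle = 1$: if $a^i \in K$ then $\alpha^i = \overline{a^i} = 1$ and since $o(\alpha) = p = o(a)$ we get $a^i = 1$. Combined with $K \trianglelefteq G$, this shows that $K\langle a\rangle = K \rtimes \langle a\rangle$ is a subgroup of $G$ of order $|K|\cdot p$, and its image under $\pi$ is exactly $\langle\alpha\rangle$. Next, to compute $\langle K,a\rangle \cap \langle b\rangle$, take an element $b^i$ in this intersection. Applying $\pi$, we get $\beta^i = \overline{b^i} \in \pi(K\langle a\rangle) = \langle\alpha\rangle$. By the Frobenius condition $\langle\alpha\rangle \cap \langle\beta\rangle = 1$, this forces $\beta^i = 1$, i.e.\ $d \mid i$. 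Conversely, $b^d \in \ker\pi = K \subseteq \langle K,a\rangle$, so $\langle b^d\rangle \subseteq \langle K,a\rangle \cap \langle b\rangle$. This gives the claimed equality $\langle K,a\rangle \cap \langle b\rangle = \langle b^d\rangle$.

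For the ``in particular'' statement, suppose $o(b) = d$, so $b^d = 1$ and the intersection is trivial. It remains to show that $K\rtimes\langle a\rangle$ is normal in $G(a,b)$. It is normalized by $K$ (trivially) and by $a$ (trivially); for $b$, the displayed relation $bab^{-1} = ha^t$ with $h \in K$ shows $bab^{-1} \in K\langle a\rangle$, and since $K \trianglelefteq G$, $b$ normalizes $K\langle a\rangle$. Because $G(a,b) = \langle K,a,b\rangle = (K\rtimes\langle a\rangle)\langle b\rangle$ and the two factors intersect trivially, we conclude $G(a,b) = (K\rtimes\langle a\rangle) \rtimes \langle b\rangle$.

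There is no real obstacle here; the proof is essentially bookkeeping once one observes that the Frobenius decomposition $\overline{G} = \langle\alpha\rangle \rtimes \langle\beta\rangle$ lifts to $G(a,b)$ via $\pi$, with the kernel $K$ sitting inside the first factor. The only point to be careful about is using the disjointness $\langle\alpha\rangle\cap\langle\beta\rangle = 1$ rather than some weaker complementation, in order to pin down the intersection exactly as $\langle b^d\rangle$ and not merely as a subgroup containing it.
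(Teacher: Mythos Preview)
Your proof is correct and follows essentially the same line as the paper's: both arguments reduce the intersection computation to the quotient $\overline{G}$ and use that $\beta^j = 1$ if and only if $d \mid j$. The paper phrases the first step as an order argument (elements $ha^i$ with $i \neq 0$ have order a multiple of $p$ and hence cannot lie in $\langle b\rangle$), whereas you apply the quotient map directly and invoke $\langle\alpha\rangle \cap \langle\beta\rangle = 1$; your formulation is marginally cleaner, and your explicit verification of normality for the ``in particular'' clause (which the paper dismisses as trivial) is a welcome addition, but the substance is the same.
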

	\begin{proof}
		Let $b^j \in \langle \ker(G\to \overline{G}),a \rangle \cap \langle b \rangle $. It is easy to see that elements of the form $ha^i\in\langle \ker(G\to \overline{G}),a\rangle = \ker(G\to \overline{G}) \rtimes \langle a\rangle$, where $i\neq 0$, have order $p$ so they cannot be in $\langle b\rangle$. Therefore, an element of the intersection $\langle \ker(G\to \overline{G}),a \rangle \cap \langle b \rangle$ must be in $\ker(G\to \overline{G})$. We have $b^j\in \ker(G\to \overline{G})$ if and only if $\beta^j = \overline{1}$, which can only happen when $d\mid j$. Thus, $b^j \in \langle b^d\rangle$. 
		
		If $o(b) = d$, then the second part of the proposition is trivial.
	\end{proof}
	The following result is an immediate consequence of the previous proposition.
	\begin{cor}
		$|G(a,b)|=|\ker(G\to\overline{G})|pd$.
	\end{cor}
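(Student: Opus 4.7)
The plan is to compute $|G(a,b)|$ via the classical product formula for subgroup orders, using the intersection identity just established in the previous proposition. First I would set $H:=\langle \ker(G\to\overline{G}),a\rangle=\ker(G\to\overline{G})\rtimes\langle a\rangle$, which has order $|H|=|\ker(G\to\overline{G})|\,p$; this holds because $a$ has order $p$ and any nontrivial power $a^i$ projects to $\alpha^i\neq\overline{1}$, forcing $\ker(G\to\overline{G})\cap\langle a\rangle=1$.

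Next, I would verify that $b$ normalizes $H$. Since $\ker(G\to\overline{G})\trianglelefteq G$, conjugation by $b$ preserves the kernel; the relation \eqref{eq:conj} then gives $bab^{-1}=ha^t\in H$, so $b\langle a\rangle b^{-1}\subseteq H$. Combining the two inclusions yields $bHb^{-1}\subseteq H$, so $H\langle b\rangle$ is a subgroup of $G$, and since it contains $\ker(G\to\overline{G})$, $a$, and $b$, it must equal $G(a,b)$.

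Applying the product formula $|H\langle b\rangle|=|H|\,|\langle b\rangle|/|H\cap\langle b\rangle|$ and invoking the previous proposition for the intersection $H\cap\langle b\rangle=\langle b^d\rangle$, whose order is $r$ because $o(b)=dr$, collapses the computation to
\[
|G(a,b)|=\frac{|\ker(G\to\overline{G})|\,p\cdot dr}{r}=|\ker(G\to\overline{G})|\,pd,
\]
as claimed. I do not expect any genuine obstacle: the only point requiring care is confirming that $H\langle b\rangle$ is actually a subgroup, which is precisely where the normalization via \eqref{eq:conj} enters. As a sanity check, one obtains the same answer by passing to the quotient $G(a,b)/\ker(G\to\overline{G})$, which is a subgroup of $\overline{G}$ containing both $\alpha$ and $\beta$ and hence equals $\overline{G}$, of order $pd$.
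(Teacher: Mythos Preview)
Your argument is correct and matches the route the paper signals: the corollary is stated as ``an immediate consequence of the previous proposition,'' and your use of the product formula $|H\langle b\rangle|=|H|\,|\langle b\rangle|/|H\cap\langle b\rangle|$ together with $H\cap\langle b\rangle=\langle b^d\rangle$ is exactly that consequence spelled out. Your sanity check via the quotient $G(a,b)/\ker(G\to\overline{G})\cong\overline{G}$ is in fact the quickest proof of all, since the paper has already established $G=G(a,b)$ and $|\overline{G}|=pd$ just before the proposition; but the product-formula route is what the paper evidently intends.
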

	
	\begin{lem}
		An element of $\ker(G\to \overline{G})$ has order dividing $6$.
	\end{lem}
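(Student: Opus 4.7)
The plan is straightforward: exploit the fact that $\ker(G\to\overline{G})$ preserves every block of $\mathcal{B}$ setwise and that each block has size $3$.

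More precisely, let $K := \ker(G\to\overline{G})$ and fix $g\in K$. By definition of the kernel, $\overline{g}$ is the identity on $\mathcal{B}$, so $B_i^g = B_i$ for every $1\leq i\leq p$. Hence for each $i$, the restriction $g_{|B_i}$ is a well-defined permutation of $B_i$, which is a set of size $3$. Thus $g_{|B_i}\in \sym(B_i)\cong S_3$, so $o(g_{|B_i})\in\{1,2,3\}$.

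Since $g$ is completely determined by its restrictions to the blocks $B_1,\dots,B_p$, the order of $g$ equals
\begin{align*}
o(g)=\operatorname{lcm}\bigl(o(g_{|B_1}),o(g_{|B_2}),\ldots,o(g_{|B_p})\bigr),
\end{align*}
and each factor lies in $\{1,2,3\}$. Therefore $o(g)$ divides $\operatorname{lcm}(1,2,3)=6$, as required.

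There is essentially no obstacle here; the only care needed is to recognize that the map $K\hookrightarrow \prod_{i=1}^p \sym(B_i)$ sending $g$ to its tuple of restrictions is an injective group homomorphism, which immediately forces $o(g)\mid 6$. The extra hypotheses in Assumption~\ref{assumption} (derangement-freeness, elementary abelian minimal normal subgroup) are not needed for this elementary observation, though they will of course matter in the subsequent structural analysis.
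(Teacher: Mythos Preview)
Your proof is correct and follows essentially the same approach as the paper: both observe that an element of the kernel fixes each block $B_i$ setwise, that its restriction to each $B_i$ lies in $\sym(B_i)\cong S_3$ and hence has order in $\{1,2,3\}$, and that the overall order therefore divides $6$. Your version is simply a more explicit write-up of the same argument, including the (useful) observation that the lcm of the restriction orders computes $o(g)$ via the embedding $K\hookrightarrow\prod_i\sym(B_i)$.
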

	\begin{proof}
		Let $g\in \ker\left(G\to \overline{G}\right)$. Since $\ker(G\to \overline{G})$ fixes each element of $\mathcal{B}$ setwise, the restriction of $g\in K$ onto a block has order $1,2$, or $3$. Therefore, $o(g) \mid  6$.
	\end{proof}

	We can assume, without loss of generality, that $b$ fixes an element in $B_1 \in \mathcal{B}$ since if the restriction of $b$ onto $B_1$ is a $3$-cycle $\sigma$, then we may find $h\in \ker(G\to \overline{G})$ whose restriction onto $B_1$ is also $\sigma^{-1}$. We then replace $b$ by $\sigma b$.
	
	Hence, either $b$ fixes $B_1$ pointwise or we may assume that its restriction on $B_1$ is the permutation $(x_1 \ y_1)$. As we will see in the next section, it is imperative to know whether $b_{|B_1}$ is trivial or a transposition. 
	
	\begin{rmk}
		If $\ker(G\to \overline{G})$ admits an involution, then we may conjugate this involution so that it does not fix all points in $B_1$. By conjugating with an appropriate element of order $3$ in the minimal normal subgroup $N$, we obtain an involution $\sigma$ whose restriction on $B_1$ is $(x_1 \ y_1)$. As $\sigma \in \ker(G \to \overline{G})$, we may replace $b$ with $ \sigma b$, as this element fixes $B_1$ pointwise.\label{rmk:fixed-points-involution}
	\end{rmk}

	In the following two sections, we will consider two cases, depending on whether	$\ker(G\to \overline{G})$ has an involution or no.

	\section{The kernel $\ker(G\to \overline{G})$ has no involutions}\label{sect:no-involutions}
	Let $G\leq \sym(\Omega)$ be a group satisfying Assumption~\ref{assumption} and $\ker(G\to \overline{G})$ has no involutions. Since $\ker(G\to \overline{G})$ does not have an involution, every element of $\ker(G\to \overline{G})$ has order $3$, i.e., it is an elementary abelian $3$-group. Assume that $\overline{G} = \langle\alpha \rangle \rtimes \langle \beta \rangle \leq \agl{1}{p}$ is non-cyclic and $\beta \alpha \beta^{-1} = \alpha^t$, for some $t\in \mathbb{Z}$ such that $\gcd(t,p)=1$. Let $a$ be a semi-regular element such that $\overline{a} = \alpha$ and $b\in G$ such that $\overline{b} = \beta$ with $o(\beta) = d$ and $o(b) = rd$. Since $\beta \alpha \beta^{-1} = \alpha^t$, there exists $h\in \ker(G\to \overline{G})$ such that $bab^{-1} = ha^t$. Henceforth, let $G(a,b)$ be the group defined in \eqref{second-group}.
	
	Assume without loss of generality that
	\begin{align}
		a = \left(x_1\ x_2\ \ldots\ x_p\right)(y_1\ y_2\ \ldots \ y_p)(z_1\ z_2\ \ldots \ z_p).
	\end{align}
	Consequently, $\alpha =\overline{a} = (B_1 \ B_2\ \ldots \ B_p)$.
	As 
	$\beta\alpha\beta^{-1} = \alpha^t$, we know that $t^d \equiv 1 \pmod p$. Hence, we must have for any $i\in \{2,3,\ldots,p\}$ that
	\begin{align*}
		B_i^\beta = B_{1+(i-1)t}.
	\end{align*} 
	Therefore, the cycle of $\beta$ containing $B_i$ must be of the form
	\begin{align*}
		\left(B_i\ B_{1+(i-1)t}\ B_{1+(i-1)t^2}\ \ldots B_{1+(i-1)t^{d-1}} \right).
	\end{align*}
	
	The structure of the derangement graph of $G(a,b)$ depends on the number of fixed points of the restriction $b_{|B_1}$ of $b$ onto $B_1$.  We distinguish the cases whether $b$ fixes $1$ or $3$ points of $B_1$.
	
	\subsection{Case~1. $b$ fixes $B_1$ pointwise}
	Throughout this subsection, we make the assumption that $b$ fixes $B_1$ pointwise.
	
	\begin{lem}
		For any $u,v\in \mathbb{Z}$ with $d\nmid v$ (or equivalently, $\overline{b^v} \neq \overline{1}$), the element $a^ub^v$ is conjugate to $k_{u,v}b^v$ in $G(a,b)$, for some $k_{u,v}\in \ker(G\to \overline{G})$.\label{lem:conjugates}
	\end{lem}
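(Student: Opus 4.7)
The plan is to carry out the conjugation first inside the Frobenius quotient $\overline{G}$ and then lift the resulting equality to $G(a,b)$ via the canonical projection. Because $\overline{G} = \langle \alpha\rangle \rtimes \langle \beta\rangle$ sits inside $\agl{1}{p}$ with $\beta\alpha\beta^{-1} = \alpha^t$ and $o(\beta) = d$, the integer $t$ has multiplicative order exactly $d$ modulo $p$ (the point stabilizer of $\agl{1}{p}$ acts faithfully on the translations). Thus, under our hypothesis $d \nmid v$, one has $t^v \not\equiv 1 \pmod p$, so $1 - t^v$ is a unit in $\mathbb{Z}/p\mathbb{Z}$.

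With that invertibility in hand, I would choose $s \in \mathbb{Z}$ solving $s(1 - t^v) + u \equiv 0 \pmod p$, and compute inside $\overline{G}$, using the relation $\beta^v \alpha^{-s} = \alpha^{-st^v}\beta^v$, that
\begin{align*}
\alpha^s (\alpha^u \beta^v) \alpha^{-s} = \alpha^{s+u-st^v}\beta^v = \alpha^{s(1-t^v)+u}\beta^v = \beta^v.
\end{align*}
Hence $\alpha^u \beta^v$ is conjugate in $\overline{G}$ to $\beta^v$ by $\alpha^s$.

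To transfer this to $G(a,b)$, I would consider the conjugate $a^s(a^u b^v)a^{-s}$ in $G(a,b)$. Applying the projection $G \to \overline{G}$ to it gives precisely $\beta^v$ by the previous paragraph, so the element
\begin{align*}
k_{u,v} := a^s (a^u b^v) a^{-s}\, b^{-v}
\end{align*}
lies in $\ker(G \to \overline{G})$, and rearranging yields $a^s(a^u b^v)a^{-s} = k_{u,v} b^v$, which is exactly the desired conjugacy. There is no substantive obstacle; the only point that deserves care is the assertion that $t$ has order exactly $d$ in $(\mathbb{Z}/p\mathbb{Z})^\ast$, since this is what makes $1 - t^v$ invertible and drives the whole argument.
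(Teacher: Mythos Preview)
Your proof is correct and follows essentially the same route as the paper: both arguments conjugate by a power of $a$ (equivalently $\alpha$) after using the invertibility of $1-t^{v}$ modulo $p$, which comes from $t$ having multiplicative order $d$, and then lift the resulting identity in $\overline{G}$ to $G(a,b)$ via the projection. The only cosmetic difference is that the paper conjugates $\beta^{v}$ to $\alpha^{u}\beta^{v}$ whereas you conjugate in the opposite direction, which amounts to replacing your $s$ by $-s$.
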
	
	\begin{proof}
		The two elements $\alpha^u\beta ^v$ and $\beta^v$ are conjugate if and only if there exists $g\in G$ such that $\alpha^u\beta^v = \overline{g}\beta^v\overline{g^{-1}}$. We shall prove that in fact we may find $n\in\mathbb{Z}$ such that $g=a^n$. First, note that since $\beta\alpha\beta^{-1} = \alpha^t$ and $d$ is the order of $\beta$, we must have that $d$ is the smallest positive integer such that $t^d-1$ is divisible by $p$, and $p\mid (t^s-1)$ if and only if $d\mid s$. Therefore, we know that $\gcd(t^v-1,p) = 1$. Let $n$ be the unique solution of $(1-t^v)n \equiv u \pmod {p}$. Then, we have
		\begin{align*}
			\alpha^n\beta^v\alpha^{-n} & = \alpha^{n-nt^v} \beta^v = \alpha^{n(1-t^v)}\beta^v = \alpha^u\beta^v.
		\end{align*}
		Consequently, there exists $k_{u,v}\in \ker(G\to \overline{G})$ such that $a^ub^v$ is conjugate to $k_{u,v}b^v$. This completes the proof.
	\end{proof}
	
	\begin{rmk}	
		Assume that $a^ub^v$ leaves $B_i$ invariant, for some $i\in \{1,2,\ldots,p\}$. 
		Since 
		\begin{align*}
			B_i^{a^ub^v} = B_{i+u}^{b^v} = B_{1+(i+u-1)t^v},
		\end{align*}
		the unique element fixed by $a^ub^v$ setwise is $B_i$, where $i$ is the unique solution modulo $p$ to the modular equation $$(t^v-1)i\equiv (1-u)t^v-1 \pmod{p}.$$
	\end{rmk}
	
	Now, we are ready to prove the first main theorem of this section.	
	\begin{thm}
		If $b$ fixes $B_1$ pointwise, then 
		\begin{align*}
			\rho(G(a,b)) = \max\{1,\tfrac{3}{d}\}.
		\end{align*}
		In particular, if $G$ admits an element $b$ of order at least $3$ fixing $B_1$ pointwise and $\overline{b} = \beta$, then $\rho(G) = 1$.\label{thm:special-case}
	\end{thm}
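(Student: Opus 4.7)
The plan is to sandwich $\rho(G(a,b))$ between matching bounds, with the upper bound coming from identifying the derangement graph with a specific lexicographic product from Section~\ref{sect:graph-theory}.

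For the \textbf{lower bound}, by Assumption~\ref{assumption} the normal subgroup $\ker(G\to\overline{G})$ is derangement-free and hence an intersecting set. Since $|G(a,b)| = |\ker(G\to\overline{G})|pd$ forces $|G(a,b)_\omega| = |\ker(G\to\overline{G})|d/3$, this intersecting set witnesses $\rho \geq 3/d$, so combined with $\rho \geq 1$ we obtain $\rho(G(a,b)) \geq \max\{1, 3/d\}$.

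For the \textbf{upper bound}, set $K_1 := \ker(G\to\overline{G}) \cap G_{B_1\text{-pointwise}}$, which has index $3$ in $\ker(G\to\overline{G})$ since the latter is normal in $G$ and acts transitively on the $3$-element block $B_1$. Because $b$ fixes $B_1$ pointwise, the subgroup $H := K_1 \langle b \rangle$ of order $|\ker(G\to\overline{G})|d/3$ coincides with the point stabilizer $G(a,b)_{x_1}$. Choose $\sigma \in \ker(G\to\overline{G})$ with $\sigma|_{B_1} = (x_1\ y_1\ z_1)$ and uniquely write each $g \in G(a,b)$ as $g = k_0 \sigma^c a^i b^j$ with $k_0 \in K_1$, $c \in \{0,1,2\}$, $i \in \mathbb{Z}_p$, $j \in \mathbb{Z}_d$. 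I will argue that the derangement graph admits the decomposition
\begin{align*}
\Gamma_{G(a,b)} \;\cong\; \Gamma_{p,3}^{d,\Sigma}\!\left(\tfrac{|\ker(G\to\overline{G})|d}{3}\right) \;\cong\; \Gamma_{p,3}^d(d)\!\left[\overline{K_{|K_1|}}\right],
\end{align*}
where $\Sigma$ is the multiset of permutations of $[d]$ induced by the conjugation action of $b$ on $\ker(G\to\overline{G})$. The independent blobs in the lexicographic product are the right $K_1$-cosets: their internal differences lie in $K_1 \setminus \{1\} \subset \ker(G\to\overline{G})$ and are thus non-derangements. Adjacency between $K_1$-cosets is verified by reducing every $g_1 g_2^{-1}$ via Lemma~\ref{lem:conjugates} to a canonical form $K a^I b^J$, and then using the criterion that $K a^I b^J$ (with $J \neq 0$) is a derangement precisely when the restriction to $B_1$ of the conjugate $K' k_{I,J} b^J$ is a non-trivial $3$-cycle.

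Granting the isomorphism, Lemma~\ref{lem:max-coclique-gen} gives
\begin{align*}
\alpha(\Gamma_{G(a,b)}) = \max\!\left\{\tfrac{|\ker(G\to\overline{G})|d}{3},\ |\ker(G\to\overline{G})|\right\},
\end{align*}
and dividing by $|G(a,b)_\omega| = |\ker(G\to\overline{G})|d/3$ yields $\rho(G(a,b)) = \max\{1, 3/d\}$. The ``in particular'' conclusion is then immediate: if $d = o(\overline{b}) \geq 3$, the maximum evaluates to $1$, and since $G = G(a,b)$, we obtain $\rho(G) = 1$. The \textbf{main obstacle} will be the case $c \neq c'$ and $J \neq 0$ of the adjacency verification, where uniformity of the edge pattern across choices of $K_1$-representatives must be established so that the lexicographic factor is genuinely $\overline{K_{|K_1|}}$; here the essential input is that $K_1$ restricts trivially to $B_1$, so the $3$-cycle condition on $B_1$ depends only on $(c, c', i, i', j, j')$ and not on the chosen representatives, and the permutations $\sigma_{i,i'}^{c,c'}$ comprising $\Sigma$ are precisely the records of how the conjugation $a^N(\cdot)a^{-N}$ from Lemma~\ref{lem:conjugates} interacts with $\sigma$ on $B_1$.
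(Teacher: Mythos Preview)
Your overall strategy---a lower bound from the derangement-free kernel and an upper bound by identifying $\Gamma_{G(a,b)}$ with a graph from Section~\ref{sect:graph-theory}, then invoking Lemma~\ref{lem:max-coclique-gen}---is exactly the paper's approach. However, the identification you propose is wrong: writing $K=\ker(G\to\overline{G})$, you claim $\Gamma_{G(a,b)} \cong \Gamma_{p,3}^{d,\Sigma}\bigl(\tfrac{|K|d}{3}\bigr)$, whereas the paper proves $\Gamma_{G(a,b)} \cong \Gamma_{p,d}^{3,\Sigma}(|K|)$. You have interchanged the roles of $3$ and $d$. For $d\neq 3$ these graphs are not even isomorphic, as a degree count shows: your graph is regular of degree $\tfrac{|K|d}{3}(p-1)+\tfrac{2|K|p(d-1)}{3}$, the correct one of degree $|K|(p-1)+\tfrac{2|K|p(d-1)}{3}$. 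That both happen to share the same independence number under Lemma~\ref{lem:max-coclique-gen} is why your final numerics come out right, but the argument as written is invalid.

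The error is already visible in your lexicographic-product claim that the $K_1$-cosets (with $K_1$ the pointwise stabiliser of $B_1$ in $K$) are the blobs, i.e.\ that between any two such cosets the bipartite graph is complete or empty. Take $g_1=k_0\sigma^{c}a^{i}b^{j}$ and $g_2=k_0'\sigma^{c}a^{i'}b^{j'}$ with the \emph{same} $c$, $i\neq i'$, and $j\neq j'$. In your $\Gamma_{p,3}^{d,\Sigma}$ the clause ``$c=c'$ and $b\neq b'$'' forces these to be adjacent for all $k_0,k_0'\in K_1$. But $g_1g_2^{-1}$ stabilises setwise a unique block $B_\ell$, and whether it fixes a point of $B_\ell$ depends on $(k_0)_{|B_\ell}$ and $(k_0')_{|B_\ell}$; once $|K|>3$ there are blocks $B_\ell$ on which $K_1$ restricts to all of $C_3$, so by varying $k_0,k_0'$ one obtains both derangements and non-derangements. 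Thus the bipartite graph between these two $K_1$-cosets is neither complete nor empty. The paper's remedy is to work with the full $K$-cosets $Ka^ub^v$ and, for each pair with $v\neq v'$, partition $K$ into cosets of the \emph{varying} stabiliser $K_{x_\ell}$ (where $\ell$ depends on $u,u',v,v'$); the resulting matchings are what the permutations $\Sigma$ in $\Gamma_{p,d}^{3,\Sigma}(|K|)$ encode. Your fixed choice of $K_1$ cannot play this role.
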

	\begin{proof}
		Let $K = \ker(G\to \overline{G})$ and let $\Gamma_{G(a,b)}$ be the derangement graph of $G(a,b)$. Assume that $|K| = m$. By Assumption~\ref{assumption}, $K$ is derangement-free, so it is a coclique of the derangement graph $\Gamma_{G(a,b)}$. Recall that a right-transversal is a system of distinct representatives of right cosets of $K$. The set $\{ a^ub^v: u\in \{0,1,\ldots,p-1\} \mbox{ and } v\in \{0,1,\ldots,rd-1\} \}$ is not a right-transvesal of $K$ in $G(a,b)$ since $b^d\in \ker(G\to \overline{G})$. It is not hard to see however that $\{ a^ub^v: u\in \{0,1,\ldots,p-1\} \mbox{ and } v\in \{0,1,\ldots,d-1\} \}$ is a right-transversal of $K$ in $G(a,b)$.
		
		\begin{claim}
		{If $v = v^\prime$ and $u\neq u^\prime$, then the subgraph of $\Gamma_{G(a,b)}$ induced by $Ka^ub^v$ and $Ka^{u^\prime}b^{v}$ is a complete bipartite graph $K_{m,m}$.}\label{claim:claim1}
		\end{claim}
		
		\begin{proof}[Proof of Claim~1]
			Let $k,k^\prime \in K.$ Then, $ka^ub^v(k^\prime a^{u^\prime}b^v)^{-1} = ka^{u-u^\prime} \left(k^\prime\right)^{-1} = k^{\prime\prime}a^{u-u^\prime}$, for some $k^{\prime\prime} \in K$. As $u\neq u^\prime$, it follows that $k^{\prime\prime}a^{u-u^\prime}$ is a derangement. Hence, the subgraph of $\Gamma_{G(a,b)}$ induced by $Ka^{u}b^v \cup Ka^{u^\prime}b^v$ is a complete bipartite graph. 
		\end{proof}
		
		\begin{claim}
		 If $v\neq v^\prime$, then the subgraph of $\Gamma_{G(a,b)}$ induced by $Ka^ub^v\cup Ka^{u^\prime}b^{v^\prime}$ is the lexicographic product $X[\overline{K_{\frac{m}{3}}}]$, where $X$ is the complete bipartite graph $K_{3,3}$ with a perfect matching removed (see Figure~\ref{fig1}).\label{claim2}
		\end{claim}
		
		\begin{proof}[Proof of Claim~2]
			Let $B_i \in \mathcal{B}$ be the block fixed by $$\alpha^{u^\prime}\beta^{v^\prime} (\alpha^{u}\beta^{v})^{-1} = \alpha^{u^\prime}\beta^{v^\prime - v}\alpha^{-u} = \alpha^{u^\prime-ut^{v^\prime-v}}\beta^{v^\prime-v}.$$ 
			Let $K_{x_i} = \{ k\in K:\ x_i^k = x_i \}$ and let $c\in K$ such that the restriction of $c$ onto $B_i$ is $c_{|B_i} = (x_i\ y_i\ z_i)$. Clearly, $\langle c\rangle$ is a right-transversal of $K_{x_i}$ in $K$. Therefore, $K=K_{x_i} \cup K_{x_i}{c} \cup K_{x_i}{c^2}$ is a disjoint union.
			
			Now consider two arbitrary elements $kc^\ell a^ub^v\in Ka^ub^v$ and $k^\prime c^{\ell^\prime} a^{u^\prime}b^{v^\prime}\in Ka^{u^\prime}b^{v^\prime}$, for some $\ell,\ell^\prime \in \{0,1,2\}$.
			We claim that if these two elements intersect on $\omega \in \Omega$, then $\omega\in B_i$. Indeed, $nc^\ell a^ub^v$ and $n^\prime c^{\ell^\prime} a^{u^\prime}b^{v^\prime}$ intersect on $\omega\not \in B_i$ if and only if $nc^\ell $ and $n^\prime c^{\ell^\prime} a^{u^\prime}b^{v^\prime}(a^ub^v)^{-1}$ also agree on $\omega$. As $nc^\ell,n^\prime c^{\ell^\prime} \in K$ and $B_i$ is the unique block fixed by $a^{u^\prime}b^{v^\prime}(a^ub^v)^{-1}$ setwise, we conclude that an element on which $kc^\ell $ and $k^\prime c^{\ell^\prime} a^{u^\prime}b^{v^\prime}(a^ub^v)^{-1}$ agree must be in $B_i$.
			
			Now, we note that
			\begin{align*}
				a^{u^\prime}b^{v^\prime}(a^ub^v)^{-1} = z a^{u^\prime -ut^{(v^\prime-v)}} b^{(v^\prime -v)}
			\end{align*}
			for some $z\in \ker(G\to \overline{G})$. By Lemma~\ref{lem:conjugates}, $a^{u^\prime -ut^{(v^\prime-v)}} b^{(v^\prime -v)}$ is conjugate to $kb^{(v^\prime - v)}$, for some $k\in \ker(G\to \overline{G})$. If $k_{|B_1}$ is trivial, then the permutation $\left(a^{u^\prime -ut^{(v^\prime-v)}} b^{(v^\prime -v)}\right)_{|B_i}$ is trivial, whereas $\left(a^{u^\prime -ut^{(v^\prime-v)}} b^{(v^\prime -v)}\right)_{|B_i}$ is a $3$-cycle if $k_{|B_1}$ has order $3$. Therefore, $\left(a^{u^\prime}b^{v^\prime}(a^ub^v)^{-1}\right)_{|B_i}$ is of order $1$ or $3$.
			
			Note that $n,n^\prime \in K_{x_i}$ fix $B_i$ pointwise. Hence, $nc^\ell$ and $n^\prime c^{\ell^\prime} a^{u^\prime}b^{v^\prime}(a^ub^v)^{-1}$ are intersecting if and only if $c^\ell$ and $c^{\ell^\prime} a^{u^\prime}b^{v^\prime}(a^ub^v)^{-1}$ are intersecting on an element of $B_i$. Since $a^{u^\prime}b^{v^\prime}(a^ub^v)^{-1}$ is of order $1$ or $3$, we have that $c^\ell$ and $c^{\ell^\prime} a^{u^\prime}b^{v^\prime}(a^ub^v)^{-1}$ are intersecting on $B_i$ for a unique $s \in \{0,1,2\}$ such that $\ell-\ell^\prime = s$.
			
			Since there are only three choices for $\ell$ and $\ell^\prime$, we conclude overall that the subgraph of $\Gamma_{G(a,b)}$ induced by $Ka^ub^v$ and $Ka^{u^\prime}b^{v^\prime}$ is equal to the lexicographic product  $X[\overline{K_{\frac m3}}]$, where $X$ is the graph in Figure~\ref{fig1}.
			This completes the proof of Claim~\ref{claim2}.\qedhere
		\end{proof}
		
		Hence, the derangement graph of $G(a,b)$ is isomorphic to $\Gamma_{p,d}^{3,\Sigma}(m)$, for some multiset of permutations $\Sigma$ of $\sym(3)$. We conclude that
		\begin{align*}
			\rho(G(a,b)) &= \max\left\{\tfrac{3p|K|}{|G(a,b)|},\tfrac{3pd|K|}{3|G(a,b)|}\right\} = \max\left\{ \tfrac{3}{d}, 1\right\}.\qedhere
		\end{align*}
	\end{proof}

	\subsection{Case~2. $b$ has one fixed point on $B_1$} We will show that whenever $b_{|B_1}$ is a transposition, then the situation is quite different to the previous subsection. We first note that $o(b) = rd$ is even since $b_{|B_1}$ is a transposition. 
	
	{First assume that $d$ is odd. Then, $b^2$ fixes $B_1$ pointwise and $\overline{b^2} = \beta^2$. Hence, $o\left(\beta^2\right)=\frac{d}{\gcd(d,2)} = o(\beta)$. From this, we deduce that $\langle \beta^2\rangle = \langle \beta\rangle$, which in turns implies that $b \in \ker(G\to \overline{G}) \langle b^2 \rangle$. Further, one can easily deduce that $G(a,b) = G(a,b^2)$. As $b^2$ fixes $B_1$ pointwise, we can use Theorem~\ref{thm:special-case} to show that 
	\begin{align*}
		\rho(G(a,b)) = \rho(G(a,b^2)) = \max\{1,\tfrac{3}{d}\}.
	\end{align*}}
	
	We therefore assume that $d$ is even for the remainder of this section. We will first state some results that turn out to be useful to the proof of the main result.
	
	The proof of the following lemma is omitted since it is similar to Lemma~\ref{lem:conjugates}.
	\begin{lem}
		For any $u,v\in \mathbb{Z}$ with $d\nmid v$, the element $a^ub^v$ is conjugate to $k_{u,v}b^v$ in $G(a,b)$, for some $k_{u,v}\in \ker(G\to \overline{G})$.\label{lem:conjugates-invo}
	\end{lem}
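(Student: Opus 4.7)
The plan is to reproduce, essentially verbatim, the argument used for Lemma~\ref{lem:conjugates}, since the hypothesis that $b$ fixes $B_1$ pointwise was not actually used there; what matters is only the projected relation $\beta\alpha\beta^{-1}=\alpha^t$ in $\overline{G}$, which comes from~\eqref{eq:conj} and is equally available here (with $d$ now even). So first I would record, using that $d = o(\beta)$ is the multiplicative order of $t$ modulo $p$, that $p \mid (t^s - 1)$ if and only if $d \mid s$. In particular, since $d\nmid v$, we have $\gcd(t^v-1, p)=1$, so $1-t^v$ is a unit modulo $p$.

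Next, I would let $n\in\{0,1,\ldots,p-1\}$ be the unique solution to the congruence
\begin{align*}
n(1-t^v) \equiv u \pmod{p},
\end{align*}
and perform the computation in $\overline{G}$ using $\beta^v\alpha^n = \alpha^{nt^v}\beta^v$:
\begin{align*}
\alpha^{-n}(\alpha^u\beta^v)\alpha^n = \alpha^{u-n}\alpha^{nt^v}\beta^v = \alpha^{u+n(t^v-1)}\beta^v = \beta^v.
\end{align*}
Hence in $\overline{G}$, the element $\alpha^u\beta^v$ is conjugate to $\beta^v$ via $\alpha^n$.

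Finally, I lift this conjugation to $G(a,b)$. Consider the element $a^{-n}(a^ub^v)a^n \in G(a,b)$. Its image under the canonical projection $G \to \overline{G}$ is precisely $\alpha^{-n}(\alpha^u\beta^v)\alpha^n = \beta^v$, so it lies in the coset $\ker(G\to\overline{G})\cdot b^v$. Therefore there exists $k_{u,v}\in \ker(G\to\overline{G})$ with
\begin{align*}
a^{-n}(a^ub^v)a^n = k_{u,v}b^v,
\end{align*}
which gives the desired conjugacy inside $G(a,b)$.

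I do not expect a genuine obstacle: the only thing one might worry about is whether the parity of $d$ interferes with the modular arithmetic, but the argument depends only on $d$ being the multiplicative order of $t$ modulo $p$, not on its parity, so the proof goes through identically to the pointwise-fixing case.
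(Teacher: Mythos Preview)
Your proposal is correct and follows essentially the same approach as the paper: the paper omits the proof precisely because it is identical to that of Lemma~\ref{lem:conjugates}, and your argument reproduces that proof verbatim, correctly observing that the pointwise-fixing hypothesis on $b$ and the parity of $d$ play no role.
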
	
	We derive the following corollary to this lemma.
	\begin{cor}
		Let $u\in \{0,1,\ldots,p-1\}$ and $v\in \{1,2,\ldots,d-1\}$. Assume that $B_i$ is fixed by $\alpha^u\beta^v$. If $v$ is even, then $a^ub^v$ fixes the same number of points as $k_{u,v}$ (see the statement of Lemma~\ref{lem:conjugates-invo}) in $B_i$. If $v$ is odd, then $a^ub^v$ fixes a unique element from $B_i$.
		\label{cor:fixed-points}
	\end{cor}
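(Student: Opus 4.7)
The plan is to use the conjugacy provided by Lemma~\ref{lem:conjugates-invo} together with the block-action analysis of Subsection~6.1 in order to reduce the count of fixed points of $a^u b^v$ to a simple restriction computation on the single block $B_1$.

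First I would invoke Lemma~\ref{lem:conjugates-invo} to fix a $k_{u,v}\in \ker(G\to\overline{G})$ such that $a^u b^v$ is conjugate in $G(a,b)$ to $k_{u,v} b^v$. Conjugate permutations in $\sym(\Omega)$ share the same cycle type, and in particular they have the same number of fixed points in $\Omega$.

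Next I would localize the fixed points. The block action of $a^u b^v$ is $\alpha^u\beta^v$, whose unique fixed block is $B_i$ by the same calculation used in Subsection~6.1 (the equation $(t^v-1)i\equiv (1-u)t^v-1\pmod p$ has a unique solution because $v\in\{1,\ldots,d-1\}$ and $\beta$ has order $d$ force $t^v\not\equiv 1\pmod p$). Hence every fixed point of $a^u b^v$ lies in $B_i$. Similarly, the block action of $k_{u,v} b^v$ is $\beta^v$, and since the $(p-1)/d$ cycles of $\beta$ on $\mathcal{B}\setminus\{B_1\}$ all have length $d$, $\beta^v$ fixes only $B_1$; hence every fixed point of $k_{u,v} b^v$ lies in $B_1$. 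Combined with the conjugation this yields
\[
|\mathrm{Fix}(a^u b^v)\cap B_i|\;=\;|\mathrm{Fix}(k_{u,v} b^v)\cap B_1|.
\]

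It then suffices to evaluate $(k_{u,v} b^v)|_{B_1} = k_{u,v}|_{B_1}\cdot b^v|_{B_1}$. Since $b|_{B_1}$ is a transposition of order $2$, $b^v|_{B_1}$ is trivial when $v$ is even and equal to $b|_{B_1}$ when $v$ is odd. Moreover, since $\ker(G\to\overline{G})$ has no involutions, $k_{u,v}|_{B_1}$ is either trivial or a $3$-cycle in $\sym(B_1)$. For even $v$, $(k_{u,v} b^v)|_{B_1}=k_{u,v}|_{B_1}$, whose fixed-point count on $B_1$ is precisely that of $k_{u,v}$ on $B_1$, giving the first assertion. For odd $v$, a direct check in $\sym(3)$ shows that the product of any element of $\langle(x_1\ y_1\ z_1)\rangle$ with the transposition $b|_{B_1}$ is itself a transposition, hence fixes exactly one point of $B_1$; this yields the second assertion. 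The only place where care is required is localizing the fixed points of $a^u b^v$ and $k_{u,v} b^v$ to a single block each, and once that is done the cycle-type invariance of conjugation transfers the count from $B_i$ to $B_1$, reducing everything to a trivial computation in $\sym(3)$.
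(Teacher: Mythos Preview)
Your argument is correct and is precisely the derivation the paper has in mind: the corollary is stated without proof as an immediate consequence of Lemma~\ref{lem:conjugates-invo}, and your use of conjugacy to transfer the fixed-point count from $B_i$ to $B_1$, followed by the parity case split on $b^v|_{B_1}$, is the natural way to extract it. One cosmetic remark: what you actually establish in the even case is that the number of fixed points of $a^ub^v$ in $B_i$ equals the number of fixed points of $k_{u,v}$ in $B_1$ (not $B_i$); the paper's phrasing is slightly loose here, but your version is exactly what is invoked in the proof of Theorem~\ref{thm:special-case-2}.
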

	
	\begin{thm}
		If $b_{|B_1}$ is a transposition and $d=o(\beta)$ is even, then $\rho(G(a,b)) = \max\{1,\frac{6}{d}\}$. In particular, if $d\leq 6$, then $\rho(G(a,b))\in \{1,\frac{3}{2},3\}$.\label{thm:special-case-2}
	\end{thm}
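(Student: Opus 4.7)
The plan is to follow the template of the proof of Theorem~\ref{thm:special-case}: pick the right-transversal $\{a^u b^v : u \in \{0,\dots,p-1\},\; v \in \{0,\dots,d-1\}\}$ of $K := \ker(G \to \overline{G})$ in $G(a,b)$, analyze the bipartite subgraph of the derangement graph $\Gamma_{G(a,b)}$ induced by each pair of cosets, and then assemble these into a global isomorphism with a graph of the form $\Gamma_{p,n}^{3,\Sigma}(m)$ so that Lemma~\ref{lem:max-coclique-gen} applies. The crucial new phenomenon, forced by the hypothesis that $b_{|B_1}$ is the transposition $(x_1\;y_1)$, is that $\Gamma_{G(a,b)}$ splits into two isomorphic connected components indexed by the parity of $v$.

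For two cosets $K a^u b^v$ and $K a^{u'}b^{v'}$, the analysis splits into three cases. When $v = v'$ and $u \neq u'$, the same calculation as in Claim~1 of Theorem~\ref{thm:special-case} shows that the induced subgraph is $K_{m,m}$, where $m = |K|$. When $v \neq v'$, writing $g = a^u b^v$ and $h = a^{u'} b^{v'}$, a single block $B_i$ is fixed by $hg^{-1}$, and by Lemma~\ref{lem:conjugates-invo} the restriction $(hg^{-1})_{|B_i}$ is conjugate in $\sym(B_i)$ to $(e\,b^{v'-v})_{|B_1}$ for some $e\in K$. Since Assumption~\ref{assumption} together with the hypothesis of Section~\ref{sect:no-involutions} forces $K$ to have no involutions, and $K$ acts transitively on each block of size $3$, the group $K_{|B_1}$ must be the cyclic group $C_3$; hence $e_{|B_1}$ is trivial or a $3$-cycle. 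Combining this with $(b^{v'-v})_{|B_1} = (b_{|B_1})^{v'-v}$, which equals the identity if $v'-v$ is even and the transposition $(x_1\;y_1)$ if $v'-v$ is odd, we conclude that $(hg^{-1})_{|B_i}$ is trivial or a $3$-cycle in the even case and a transposition in the odd case.

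The derangement question between these two cosets reduces to whether $(k_2)_{|B_i}\,(hg^{-1})_{|B_i}\,(k_1^{-1})_{|B_i}$ is fixed-point-free, and since $K_{|B_i} = C_3$, this product always lies in the coset $C_3 \cdot (hg^{-1})_{|B_i}$ of $\sym(B_i)$. In the odd-parity case this coset consists entirely of transpositions, each of which fixes a point, so no pair of vertices across the two cosets is adjacent; in the even-nonzero case, the same argument as in Claim~2 of Theorem~\ref{thm:special-case} produces the lexicographic structure $\widetilde{K}_{3,3}[\overline{K_{m/3}}]$. It follows that $\Gamma_{G(a,b)}$ is the disjoint union of two copies of a graph isomorphic to $\Gamma_{p,\,d/2}^{3,\Sigma}(m)$, one for each parity of $v$.

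Applying Lemma~\ref{lem:max-coclique-gen} yields $\alpha\bigl(\Gamma_{p,\,d/2}^{3,\Sigma}(m)\bigr) = \max\{m,\,md/6\}$, so $\alpha(\Gamma_{G(a,b)}) = 2\max\{m,\,md/6\}$; dividing by the stabilizer order $|G(a,b)_\omega| = md/3$ gives $\rho(G(a,b)) = \max\{1,\,6/d\}$, which for $d\in\{2,4,6\}$ belongs to $\{3,\tfrac{3}{2},1\}$. The main technical obstacle is the careful verification of the parity dichotomy for $(hg^{-1})_{|B_i}$: one must check that the conjugation of Lemma~\ref{lem:conjugates-invo} really transports the parity information from $B_1$ to the fixed block $B_i$, and that the no-involution hypothesis genuinely rules out any transposition arising from an element of $K$ that could spoil the cleanness of the two bipartite structures.
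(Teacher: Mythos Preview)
Your proposal is correct and follows essentially the same approach as the paper: the same right-transversal, the same three-way case split on $(v,v')$ yielding $K_{m,m}$, $\widetilde{K}_{3,3}[\overline{K_{m/3}}]$, or the empty bipartite graph according to parity, and the same identification of $\Gamma_{G(a,b)}$ as a disjoint union of two copies of $\Gamma_{p,d/2}^{3,\Sigma}(m)$ to which Lemma~\ref{lem:max-coclique-gen} is applied. Your phrasing ``conjugate in $\sym(B_i)$ to $(e\,b^{v'-v})_{|B_1}$'' is a slight abuse (the two permutations live on different three-element sets), but the intended meaning---that the cycle types agree, which is exactly Corollary~\ref{cor:fixed-points}---is clear and correct.
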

	\begin{proof}
		Again, let $K = \ker(G\to \overline{G})$ and let $\Gamma_{G(a,b)}$ be the derangement graph of $G(a,b)$. Assume that $|K| = m$. By Assumption~\ref{assumption}, $K$ is derangement-free, so it is a coclique of the derangement graph $\Gamma_{G(a,b)}$. Consider the right-transversal of $K$ in $G(a,b)$ given by $\{ a^ub^v: u\in \{0,1,\ldots,p-1\} \mbox{ and } v\in \{0,1,\ldots,d-1\} \}$.
		
		The proof of the following claim is omitted since it is similar to its analogue in Theorem~\ref{thm:special-case}.
		
		\begin{claim}
		If $v=v^\prime$ and $u\neq u^\prime$, then the subgraph of $\Gamma_{G(a,b)}$ induced by $Ka^ub^v\cup Ka^{u^\prime}b^{v}$ is a complete bipartite graph $K_{m,m}$.
		\end{claim}
		
		\begin{claim} 
		 If $v\neq v^\prime$, then the subgraph of $\Gamma_{G(a,b)}$ induced by $Ka^ub^v$ and $Ka^{u^\prime}b^{v^\prime}$ is empty if $v^\prime-v$ is odd, and equal to $X[\overline{K_{\frac{m}{3}}}]$ where $X$ is the graph in Figure~\ref{fig1} if $v^\prime-v$ is even.
		\end{claim}
		\begin{proof}[Proof of Claim~4]
			Let $B_i$ be the unique block fixed by $\alpha^{u^\prime}\beta^{v^\prime}(\alpha^u\beta^v)^{-1}$. Let $c \in K$ such that $\langle c\rangle$ is a right-transversal of $K_{x_i}$ in $K$. Similar to the proof of Claim~\ref{claim2} in Theorem~\ref{thm:special-case}, we examine the edges between $K_{x_i}c^\ell a^{u}b^{v}$ and $K_{x_i}c^{\ell^\prime} a^{u^\prime}b^{v^\prime}$, for all $\ell,\ell^\prime \in \{0,1,2\}$. For any $k,k^\prime \in K_{x_i}$, we have that $kc^\ell a^{u}b^{v}$ and $k^\prime c^{\ell^\prime} a^{u^\prime}b^{v^\prime}$ are intersecting if and only if  $kc^\ell $ and $k^\prime c^{\ell^\prime} a^{u^\prime}b^{v^\prime}\left(a^{u}b^{v}\right)^{-1}$ are intersecting on $B_i$. Here, it is worthwhile to note that
			\begin{align*}
				a^{u^\prime}b^{v^\prime}\left(a^{u}b^{v}\right)^{-1} = za^{u^\prime -ut^{(v^\prime -v)}} b^{(v^\prime  -v)},
			\end{align*}
			for some $z\in K$.
			\begin{itemize}
			\item If $v-v^\prime$ is even, then $b^{v^\prime-v}$ fixes $B_1$ pointwise, and by Corollary~\ref{cor:fixed-points} we know that the permutation $\left(za^{u^\prime -ut^{(v^\prime -v)}} b^{(v^\prime  -v)}\right)_{|B_i}$ has the same number of fixed points on $B_i$ as 
			\begin{align*}
				\left(zk_{u^\prime -ut^{(v^\prime -v)},v^\prime -v}\right)_{|B_1}.
			\end{align*}
			The latter has $1$ or $3$ fixed points.
			Using the same argument as Claim~\ref{claim2} in the proof of Theorem~\ref{thm:special-case}, we conclude that the subgraph induced by $Ka^ub^v$ and $Ka^{u^\prime}b^{v^\prime}$ is isomorphic to $X[\overline{K_{\frac{m}{3}}}]$, where $X$ is the graph given in Figure~\ref{fig1}.
			
			\item If $v^\prime  - v$ is odd, then by Corollary~\ref{cor:fixed-points},  $a^{u^\prime -ut^{(v^\prime -v)}} b^{(v^\prime  -v)}$ fixes a unique point of $B_i$, and so does $za^{u^\prime -ut^{(v^\prime -v)}} b^{(v^\prime  -v)}$ since $z\in K$ has order $1$ or $3$. Let $c \in K$ be such that $\langle c\rangle$ is a right-transversal of $K_{x_i}$ in $K$. Let $\ell,\ell^\prime  \in \{0,1,2\}$. For any $n,n^\prime \in K_{x_i}$, the elements $nc^{\ell}a^ub^v$ and $n^\prime c^{\ell^\prime} a^{u^\prime}b^{v^\prime}$ always agree on an element of $B_i$ since $n,n^\prime \in K_{x_i}$ fix $B_i$ pointwise, and the permutations $c^{\ell}_{|B_i}$ and $\left( c^{\ell^\prime} a^{u^\prime}b^{v^\prime}\left(a^ub^v\right)^{-1}\right)_{|B_i} = \left(c^{\ell^\prime}za^{u^\prime-ut^{(v^\prime -v)}} b^{v^\prime -v}\right)_{|B_i}$ are respectively elements of order belonging to $\{1,3\}$ and an involution. Consequently, there is no edge between $Ka^ub^v$ and $Ka^{u^\prime}b^{v^\prime}$ in $\Gamma_{G(a,b)}$.\qedhere
			\end{itemize}
		\end{proof}
		By combining Claim~3 and Claim~4, it is not hard to show that $\Gamma_{G(a,b)}$ is the union of two graphs isomorphic to $\Gamma_{p,t_1}^{3,\Sigma}(|K|)$ and $\Gamma_{p,t_2}^{3,\Sigma}(|K|)$, where 
		\begin{align*}
			t_1 = \left|\left\{ 0\leq i\leq d-1: i\mbox{ is odd } \right\}\right| =  \tfrac{d}{2}  \mbox{ and }t_2 = \left|\left\{ 0\leq i\leq d-1: i\mbox{ is even } \right\}\right| =  \tfrac{d}{2} . 
		\end{align*}
		The independence number of the graph $\Gamma_{p,t_1}^{3,\Sigma}(|K|)$ and $\Gamma_{p,t_2}^{3,\Sigma}(|K|)$ are both equal to $\max\{ t_1|K_{x_1}|,|K| \} = \max\{ t_2|K_{x_1}|,|K|\}$. The independence number of the union of the two graphs is $$\max\{ |K_{x_1}|d,2\max\{\tfrac{d}{2}|K_{x_1}|,|K|\}\}.$$
		Therefore, $$\rho(G(a,b)) =  \max\left\{1, \max\left\{\tfrac{6}{d},1\right\}\right\} = \max\left\{1,\tfrac{6}{d}\right\}.$$
		
		If $ \tfrac{d}{2} \geq 3$, then $\rho(G(a,b)) = 1$. If $\tfrac{d}{2} < 3$, then using the fact that $d$ is even we have $d\in \{2,4\}$  and $\rho(G(a,b)) \in \{\frac{3}{2},3\}$.
		This completes the proof.
	\end{proof}
	\section{$\ker(G\to \overline{G})$ has an involution}\label{sect:involutions}
{	Throughout this section, we let $G\leq \sym(\Omega)$ be a transitive group satisfying Assumption~\ref{assumption} and we assume that $\ker(G\to \overline{G})$ admits an involution. Assume that $\overline{G} = \langle \alpha\rangle\rtimes \langle \beta\rangle$, %.	
	and recall that $a$ and $b$ are two elements of $G$ such that $\overline{a} = \alpha$ and $\overline{b} = \beta$. Recall that $o(\alpha) = o(a) = p$, $o(\beta) = d$, and $o(b) = rd$, for some positive integer $r$. Since $\ker(G\to\overline{G})$ has an involution, we may assume that $b$ fixes $B_1$ pointwise (See Remark~\ref{rmk:fixed-points-involution}). As we have seen previously, we have $G = G(a,b)$.
	
	For any distinct $ g,g^\prime\in \ker(G\to \overline{G})$ of order $3$, we have $gg^\prime g^{-1} = g^\prime$, and thus $\langle g,g^\prime \rangle = C_3\times C_3$ unless $g^\prime = g^{-1}$. Let $E$ be the subgroup generated by all elements of order $3$ in $\ker(G\to \overline{G})$. By commutativity of the elements of order $3$ in $\ker(G\to \overline{G})$, we know that $E$ is an elementary abelian $3$-group.
	
	Again, we let $K = \ker(G\to \overline{G})$ and $|K| = m$. The set $\{ Ka^ub^v :\ u\in \{0,1,\ldots,p-1\}, v \in \{0,1,\ldots,d-1\} \}$ is again a right-transversal of $K$ in $G(a,b)$. For any $u,u^\prime \in \{0,1,\ldots,d-1\}$ and $v,v^\prime \in \{0,1,\ldots,d-1\}$, we have 
	\begin{align*}
		\left(Ka^ub^v\right) \left(Ka^{u^\prime} b^{v^\prime}\right) = K a^{u^\prime -ut^{v^\prime -v}} b^{v^\prime -v}.
	\end{align*}
	\begin{rmk} 
		\hfil
		\begin{enumerate}[(a)]
			\item For any $u\in \{0,1,\ldots,p-1\}$ and $v\in \{1,\ldots,d-1\}$, the element $a^ub^v$ fixes a certain block $B_i$ setwise. Since $K$ admits an involution, we may assume that $a^ub^v$ fixes this block pointwise, otherwise we can replace the representative with something fixing the block pointwise (by multiplying with an element of $K$).\label{rmk:a}
			\item In contrast to \eqref{rmk:a}, the case where $\ker(G\to \overline{G})$ has no involution is quite different. Indeed, if $\left(a^ub^v\right)_{|B_i}$ is an involution, then one cannot multiply it with an element of $\ker(G\to \overline{G})$ to make the resulting permutation fix $B_i$ pointwise.
	\end{enumerate}\label{rmk:important} 
	\end{rmk} 
	Now, we proceed with the proof. Let $\{k_1=1,k_2,\ldots,k_{s}\}$ be a right-transversal of $E$ in $K$, where $s$ is the index of $E$ in $K$. Then,
	\begin{align*}
		K = \bigcup_{j = 1}^s Ek_j
	\end{align*}
	Also, let $c\in E$ such that $c_{|B_i}\neq 1$. Then, 
	\begin{align*}
		E = \bigcup_{\ell = 0}^2E_{x_i}c^\ell.
	\end{align*}
	Hence,
	\begin{align*}
		K = \bigcup_{j=1}^s \bigcup_{\ell =0}^{2}  E_{x_i}c^\ell k_j.
	\end{align*}
	
	In the next lemma, we show that  every non-trivial element of the right-transversal $\{k_1=1,k_2,\ldots,k_{s}\}$ can be assumed to have order $2$.
	\begin{lem}
		The exists a right-transversal of the subgroup $E$ of $K$ consisting of the identity and involutions.
	\end{lem}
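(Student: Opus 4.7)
The plan is to leverage the earlier lemma that every element of $K = \ker(G \to \overline{G})$ has order dividing $6$, combined with the observation that $E$, being an elementary abelian $3$-group generated by the order-$3$ elements of $K$, consists precisely of the identity together with every element of order $3$ in $K$.

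First I would observe that any $k \in K \setminus E$ has order $2$ or $6$. Since $o(k)$ divides $6$, the only candidates are $1, 2, 3, 6$; the value $1$ gives the identity (which lies in $E$), and any element of order $3$ lies in $E$ by construction, ruling both options out.

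Next, I would produce an involution inside every non-identity coset $kE$. If $o(k) = 2$ then $k$ itself is an involution. If $o(k) = 6$, then $k^2$ has order $3$, so $k^2 \in E$; consequently $k^3 = k \cdot k^2 \in kE$, and $k^3$ has order $2$. In either case the coset $kE$ contains an involution, which I select as the coset representative. Setting $k_1 = 1$ and choosing one such involution from each of the remaining cosets of $E$ in $K$ then yields the desired right-transversal.

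I expect no real obstacle here: the underlying structural fact, which I would state explicitly, is that $K/E$ is an elementary abelian $2$-group, because $k^2$ has order dividing $3$ and hence lies in $E$ for every $k\in K$. This is exactly what ensures that every non-trivial coset of $E$ contains an element of even order, and what allows order-$6$ representatives to be cubed into involutions without leaving their coset.
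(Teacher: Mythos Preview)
Your proof is correct and follows essentially the same approach as the paper: both arguments observe that any $k\in K\setminus E$ has order $2$ or $6$, and in the order-$6$ case replace $k$ by its cube $k^{3}$, which is an involution lying in the same coset since $k^{2}\in E$. The only cosmetic difference is that you write the coset as $kE$ while the paper works with right cosets $Ek$; since $k^{3}=k^{2}\cdot k=k\cdot k^{2}$, this is immaterial.
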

	\begin{proof}
		Recall that $\{k_1=1,k_2,\ldots,k_{s}\}$ is a right-transversal. Since $E$ contains all elements of order $3$ of $K$, $o(k_i)\in \{2,6\}$ for $i\neq 1$. If $o(k_i) = 2$, then we are done. If $o(k_i) = 6$, then we know that $k_i = k_i^{-2}k_i^3$, and that $o(k_i^{-2}) = 3$ and $o(k_i^{3}) = 2$. Consequently, we conclude that $ k_i\in Ek_i^3$, so $Ek_i = Ek_i^3$. In other words, there exists a right-transversal consisting of the identity and involutions.
	\end{proof}
	
	From the above lemma, we assume henceforth $\{1,k_2,\ldots,k_s\}$ is such that $o(k_2) =\ldots =  o(k_s) = 2$.
	\begin{thm}
		$\rho(G(a,b)) = \max \left\{ 1,\frac{3}{d} \right\}$.\label{thm:special-case-3}
	\end{thm}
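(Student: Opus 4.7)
Following the strategy of Theorems~\ref{thm:special-case} and~\ref{thm:special-case-2}, I will analyze the derangement graph $\Gamma_{G(a,b)}$ on the cosets of $K:=\ker(G\to\overline{G})$. With $m=|K|$ and the right-transversal $\{a^ub^v: 0\le u\le p-1,\ 0\le v\le d-1\}$ of $K$ in $G(a,b)$, each coset $Ka^ub^v$ is a coclique in $\Gamma_{G(a,b)}$ since $K$ is derangement-free by Assumption~\ref{assumption}. The argument of Claim~\ref{claim:claim1} carries over verbatim: the bipartite subgraph between cosets with the same $v$ but different $u$ is $K_{m,m}$. So any coclique $\mathcal{F}$ uses at most one $K$-coset for each $v$; write $\mathcal{F}=\bigsqcup_v\mathcal{F}_v$ with $\mathcal{F}_v\subseteq Ka^{u(v)}b^v$.

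The key computation analyzes the bipartite subgraph between $Ka^ub^v$ and $Ka^{u'}b^{v'}$ for $v\ne v'$. Let $B_i$ be the unique block setwise-fixed by $\alpha^{u'}\beta^{v'}(\alpha^u\beta^v)^{-1}$. By Remark~\ref{rmk:important}(a), there is a representative $h_0$ of the difference coset that fixes $B_i$ pointwise; set $A_0=\pi(k_0)$ where $h_0=k_0 a^{u^*}b^{v^*}$ and $\pi\colon K\to\sym(B_i)\cong S_3$ is the restriction homomorphism. Using the decomposition $K=\bigsqcup_{j=1}^s\bigsqcup_{\ell=0}^2 E_{x_i}c^\ell k_j$ obtained above, each $K$-coset decomposes into six parts $V_A^{(v)}$ ($A\in S_3$) of size $m/6$. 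A direct computation of $(k'a^{u'}b^{v'})(ka^ub^v)^{-1}$, reduced modulo $h_0$ and using normality of $K$ in $G(a,b)$, shows that two elements with $\pi$-images $A,A'$ are adjacent if and only if $A'BA^{-1}$ is a $3$-cycle of $\sym(B_i)$, where $B\in S_3$ depends only on $(u,v,u',v')$. Consequently the bipartite subgraph is isomorphic to two disjoint copies of $\widetilde{K}_{3,3}[\overline{K_{m/6}}]$.

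For the upper bound $\alpha(\Gamma_{G(a,b)})\le\max\{m,md/3\}$, set $f_v(A):=|\mathcal{F}_v\cap V_A^{(v)}|$ and perform a case analysis on the supports inside $S_3$. The edge characterization above forces: whenever $f_v(A)>0$ and $f_{v'}(A')>0$ with $v\ne v'$, the product $A'B_{v,v'}A^{-1}$ must not be a $3$-cycle. This dichotomizes into either (i) every non-empty support restricted to each parity class of $S_3$ is a singleton, and all those singletons agree across $v$ within each parity, giving $|\mathcal{F}_v|\le 2\cdot m/6 = m/3$ and thus $|\mathcal{F}|\le md/3$; or (ii) some support has size $\ge 2$ inside one parity class, which forces every other $\mathcal{F}_{v'}$ to avoid that parity entirely, bounding $|\mathcal{F}|\le m$. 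The matching lower bound is witnessed by two explicit cocliques: $K$ itself of size $m$, and, setting $u(v)=0$ and $A_0=1$ throughout, the family $\bigcup_v(V_1^{(v)}\cup V_{(12)}^{(v)})$ of size $md/3$. Dividing $\alpha(\Gamma_{G(a,b)})=\max\{m,md/3\}$ by $|G(a,b)_\omega|=md/3$ yields $\rho(G(a,b))=\max\{1,3/d\}$.

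The main obstacle is the explicit identification of the shift element $B$: tracking how conjugation by $a^{u^*}b^{v^*}$ acts on the decomposition of $K$ when reducing the quotient to the $h_0$-fixed form requires careful bookkeeping, because involutions in $K$ allow the $\pi$-image to land in any of the six $S_3$-classes rather than only $A_3$ as in the no-involution case. Once $B$ is in hand, the parity-based case analysis is transparent. Alternatively, the upper bound may be viewed through the No-Homomorphism Lemma applied to an edge-preserving map between $\Gamma_{G(a,b)}$ and a graph of the form $\Gamma_{p,d}^{3,\Sigma}(m)$ from Section~\ref{sect:graph-theory}, whose independence number $\max\{m, md/3\}$ is supplied by Lemma~\ref{lem:max-coclique-gen}; the same combinatorial constraints emerge.
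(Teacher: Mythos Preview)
Your proposal contains a genuine gap in the upper-bound argument. The decomposition of each coset $Ka^{u}b^{v}$ into six parts $V_A^{(v)}$ indexed by $A\in\sym(3)$ is carried out via the restriction map $\pi_i\colon K\to\sym(B_i)$, where $B_i$ is the unique block fixed setwise by $a^{u'}b^{v'}(a^{u}b^{v})^{-1}$. But $B_i$ depends on the \emph{pair} of cosets being compared, so the labeling $V_A^{(v)}$ is not a single well-defined partition of $Ka^{u(v)}b^{v}$: it changes as $v'$ varies. Consequently the function $f_v(A)=|\mathcal{F}_v\cap V_A^{(v)}|$ is not globally well-defined, and your parity dichotomy (i)/(ii) cannot be run simultaneously across all $v$. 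If instead you fix a reference block, say $B_1$, and label everything via $\pi_1$, then the adjacency condition between $V_A^{(v)}$ and $V_{A'}^{(v')}$ no longer reduces to ``$A'BA^{-1}$ is a $3$-cycle'', because $\ker\pi_1\neq\ker\pi_i$ in general; the two decompositions of $K$ into six parts do not coincide. Your alternative suggestion of applying the No-Homomorphism Lemma with $\Gamma_{p,d}^{3,\Sigma}(m)$ hits the same wall: the bipartite pieces between $K$-cosets are two disjoint copies of $\widetilde{K}_{3,3}[\overline{K_{m/6}}]$, and there is no single coarsening into $\widetilde{K}_{3,3}[\overline{K_{m/3}}]$ that is compatible for all pairs $(v,v')$ at once.

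The paper circumvents this entirely by restricting to the elementary abelian $3$-subgroup $E\leq K$ rather than working with all of $K$. For each pair $(v,v')$, the induced bipartite subgraph on $Ea^{u}b^{v}\cup Ea^{u'}b^{v'}$ is a \emph{single} copy of $X[\overline{K_{|E|/3}}]$ (Claim~6 with $j=1$), exactly as in the no-involution case. Hence the subgraph on $\bigcup_{u,v}Ea^{u}b^{v}$ realises a graph of type $\Gamma_{p,d}^{3,\Sigma}(|E|)$ inside $\Gamma_{G(a,b)}$, and the No-Homomorphism Lemma together with Lemma~\ref{lem:max-coclique-gen} gives $\alpha(\Gamma_{G(a,b)})/|G(a,b)|\leq\max\{|E|,|E|d/3\}/(|E|pd)$, which is the required upper bound. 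Your lower-bound construction is essentially correct: the set $\bigcup_{v=0}^{d-1}K_{z_1}b^{v}$ (which is what your $\bigcup_v(V_1^{(v)}\cup V_{(12)}^{(v)})$ becomes when read through $\pi_1$) is a genuine coclique of size $md/3$, since $b$ fixes $B_1$ pointwise and $K_{z_1}$ is closed under the relevant quotients on $B_1$.
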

	\begin{proof}
		 
		  Let $u,u^\prime\in \{0,1,2,\ldots,p-1\}$ and $v,v^\prime \in \{0,1,\ldots,d-1\}$, and let us determine the edges induced by the vertices in $Ka^ub^v \cup Ka^{u^\prime}b^{v^\prime}$ in $\Gamma_{G(a,b)}$. 
		
		The proof of the following claim is omitted since it is similar to the proof of Claim~\ref{claim:claim1}.
		
		\begin{claim}
		If $v = v^\prime$ and $u\neq u^\prime$, then the subgraph of $\Gamma_{G(a,b)}$ induced by $Ka^ub^v \cup Ka^{u^\prime} b^{v^\prime}$ is a complete bipartite $K_{m,m}$.
		\end{claim} 
		
		Suppose that $B_i$ is the unique block fixed setwise by $a^{u^\prime}b^{v^\prime}\left(a^ub^v\right)^{-1}$.
		\begin{claim} 
		 If $v \neq v^\prime$, then the subgraph of $\Gamma_{G(a,b)}$ induced by $Ka^ub^v \cup Ka^{u^\prime} b^{v^\prime}$ contains a subgraph isomorphic to the disjoint union of $s = [K:E]$ copies of $X[\overline{K_{\frac{|E|}{3}}}]$, where $X$ is the graph in Figure~\ref{fig1}.
		\end{claim}
		\begin{proof}[Proof of Claim~6]
			Recall that $B_i$ is the unique element of $\mathcal{B}$ fixed by $(a^ub^v)(a^{u^\prime}b^{v^\prime})^{-1}$ setwise. By Remark~\ref{rmk:important}, we may assume that $(a^ub^v)(a^{u^\prime}b^{v^\prime})^{-1}$ fixes $B_i$ pointwise. 
			 We note that a vertex in $Ka^{u}b^{v}$ intersects a vertex in $Ka^{u^\prime}b^{v^\prime}$ on $\omega\in \Omega$ only if $\omega \in B_i$.

			Fix $j\in \{1,2,\ldots,s\}$. We will show now that for any $\ell\in \{0,1,2\}$, there exists a unique $\ell^\prime \in \{0,1,2\}$ such that the subgraph of $\Gamma_{G(a,b)}$ induced by $Ec^\ell k_j \cup Ec^{\ell^\prime}k_j$ is isomorphic to $X[\overline{K_{\frac{|E|}{3}}}]$. Since the elements of $E_{x_i}$ fix $B_i$ pointwise, we first note that one only needs to determine whether $c^\ell k_j$ and $c^{\ell^\prime}k_{j}(a^ub^v)(a^{u^\prime}b^{v^\prime})^{-1}$ are adjacent, or equivalently do not fix a point in $B_i$. As $(a^ub^v)(a^{u^\prime}b^{v^\prime})^{-1}$ also fixes $B_i$ pointwise, we only need to check the adjacency between $c^\ell k_j$ and $c^{\ell^\prime}k_{j}$. Hence, for any $\ell \in \{0,1,2\}$ there exists a unique $s\in \{0,1,2\}$ such that $\ell -\ell^\prime =s$ and for which vertices in $E_{x_i}c^\ell k_j a^ub^v \cup E_{x_i}c^{\ell^\prime} k_j a^{u^\prime}b^{v^\prime}$ form a coclique. For any other values of $\ell-\ell^\prime$, the vertices in $E_{x_i}c^\ell k_j a^ub^v \cup E_{x_i}c^{\ell^\prime} k_j a^{u^\prime}b^{v^\prime}$  induces a complete bipartite graph.
			
			Consequently, the subgraph of $\Gamma_{G(a,b)}$ induced by $ Ek_ja^ub^v \cup Ek_ja^{u^\prime}b^{v^\prime}$ is isomorphic to $X[\overline{K_{\frac{|E|}{3}}}]$. The $s$ copies are obtained by varying $j\in \{1,2,\ldots,s\}$. This completes the proof of Claim~6.
		\end{proof} 
		\begin{claim}
		There is a homomorphism from $\Gamma_{p,d}^{\Sigma}(|E|)$ to $\Gamma_{G(a,b)}$, for some multiset $\Sigma$ of permutations of $\sym(3)$.\label{claim:claim7}
		\end{claim} 
		\begin{proof}[Proof of Claim~7]
			Using Claim~6, it is easy to see that there is in fact a multiset of permutation $\Sigma$ of $\sym(3)$ for which the graph $\Gamma_{p,d}^\Sigma(|E|)$ can be embedded into $\Gamma_{G(a,b)}$.
		\end{proof}
		
		Using Claim~\ref{claim:claim7} and the No-Homomorphism Lemma, we conclude that 
		\begin{align*}
			1\leq \rho(G(a,b)) = \frac{\alpha(\Gamma_{G(a,b)})}{\frac{|G(a,b)|}{3p}}\leq \frac{\alpha(\Gamma_{p,d}^\Sigma(|E|))}{\frac{|E|pd}{3p}} = \frac{\max\{|E_{x_1}|d,|E|\}}{|E_{x_1}|d} = \max\{1,\tfrac{3}{d}\}.
		\end{align*}
		If $d\geq 3$, then clearly $\rho(G(a,b)) = 1$. If $1\leq d\leq 2$, then $\frac{3}{d} = \frac{|K|}{|K_{x_1}|d}$ is attained through the intersecting set $K$. Thus, if $1\leq d\leq 2$, then $\rho(G(a,b)) = \frac{3}{d}$. This completes the proof of the theorem.
	\end{proof}}
	
	\section{The Doubly transitive case}\label{sect:2-transitive}
	Throughout this section, we assume the following.
	\begin{ass}
		Let $G\leq \sym(\Omega)$ is a transitive group satisfying Assumption~\ref{assumption} such that $\overline{G}$ is $2$-transitive. \label{assumption2}
	\end{ass}
	  As $\overline{G}$ has degree $p$, the theory of transitive groups of prime degrees plays an important role in what follows. The next lemma is crucial to the proof of the main result of this section.
	\begin{lem}
		If $H\leq\sym(p)$ is transitive and $P$ is a Sylow $p$-subgroup of $H$, then $P$ is cyclic and $\operatorname{N}_{H}(P)=P$ if and only if $H= P$.\label{lem:cyclic}
	\end{lem}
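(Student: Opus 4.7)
The forward direction (assuming $H=P$) is immediate: since $|\sym(p)|=p!$ has $p$-adic valuation $1$, the Sylow $p$-subgroup of any subgroup of $\sym(p)$ has order exactly $p$, so $P$ is cyclic, and $N_H(P)=H=P$ trivially. My plan is thus entirely focused on the converse.

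Assuming $P$ is cyclic and $N_H(P)=P$, the first step is to observe that $|P|=p$ (again by $v_p(p!)=1$), so $P$ is abelian. Therefore $P\leq C_H(P)\leq N_H(P)=P$, which gives $C_H(P)=P=N_H(P)$. In particular the quotient $N_H(P)/C_H(P)$ is trivial and hence a $p$-group, which puts us squarely in the hypothesis of Burnside's Normal $p$-Complement Theorem. Applying that theorem, $H$ has a normal $p$-complement, namely a normal subgroup $N\trianglelefteq H$ with $|N|=[H:P]$ and $\gcd(|N|,p)=1$, and $H=N\rtimes P$.

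The second step is to rule out the existence of a nontrivial such $N$ by using the transitive action. Since $N\trianglelefteq H$ and $H$ is transitive on $\Omega$ with $|\Omega|=p$, every pair of $N$-orbits on $\Omega$ can be exchanged by an element of $H$, so all $N$-orbits have the same size, say $\ell$. Then $\ell$ divides both $|N|$ and $p=|\Omega|$; since $\gcd(|N|,p)=1$, we conclude $\ell=1$, so $N$ fixes every point of $\Omega$. Because $H\leq\sym(\Omega)$ acts faithfully, this forces $N=1$ and hence $H=P$.

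The main (and really only) subtlety is invoking Burnside's transfer theorem at the right moment; beyond that, the argument is a standard orbit-counting step that uses $\gcd(|N|,p)=1$ against $|\Omega|=p$. No case analysis on the classification of transitive groups of prime degree is needed, which is what makes the lemma a clean stepping stone for Section~\ref{sect:2-transitive}.
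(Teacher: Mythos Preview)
Your argument is correct. The appeal to Burnside's normal $p$-complement theorem is exactly the right tool: from $|P|=p$ you get $P$ abelian, hence $C_H(P)=N_H(P)=P$, so $N_H(P)/C_H(P)$ is trivial and Burnside produces a normal complement $N$ of index $p$; the orbit argument then forces $N=1$ by faithfulness.

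The paper takes a different, ostensibly more elementary route: after observing $|P|=p$, it asserts that $N_H(P)=P$ implies $P$ is the \emph{unique} Sylow $p$-subgroup of $H$, hence $P\trianglelefteq H$, hence $H\leq N_H(P)=P$. As written this step is not justified---self-normalizing is generally the opposite of normal, and the Sylow count gives $n_p=[H:N_H(P)]=[H:P]$, which need not equal $1$ a priori. Your approach via Burnside's transfer theorem avoids this gap entirely, trading a slightly heavier piece of machinery for a complete and transparent argument. In short, what the paper's approach would buy (if repaired) is brevity; what yours buys is rigor without any hidden case analysis.
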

	\begin{proof}
		A Sylow $p$-subgroup of $H$ has order $p^k$, for some $k\geq 1$. Since $p^2\nmid p!$, clearly a Sylow $p$-subgroup of any transitive subgroup of $\sym(p)$ must be cyclic and thus of order $p$. 
		
		Let $P$ be a Sylow $p$-subgroup of $H$. If $H = P$, then $\operatorname{N}_H(P) = H$. Conversely, if $\operatorname{N}_H(P) = P$, then $P$ is the unique Sylow $p$-subgroup of $H$ and $P\trianglelefteq H$. Hence, $P\leq H\leq \operatorname{N}_H( P)= P$, which completes the proof.
	\end{proof}
	
	The main result of this section is the following.
	\begin{thm}
		If $G\leq \sym(\Omega)$ satisfies Assumption~\ref{assumption2}, then $\rho(G) = 1$ unless $p = q+1$ is a Fermat prime and $\psl{2}{q}\leq \overline{G} \leq \pgammal{2}{q}$.\label{thm:special-case-4}
	\end{thm}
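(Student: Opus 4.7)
The plan is to reduce the $2$-transitive case to the solvable case handled in Sections~\ref{sect:no-involutions} and \ref{sect:involutions}. I will exhibit a transitive subgroup $H\leq G$ such that $\overline{H}$ is a solvable Frobenius subgroup of $\operatorname{N}_{\overline{G}}(\langle\overline{a}\rangle)$, then deduce $\rho(G)=1$ from $\rho(H)=1$ via the standard monotonicity $\rho(G)\leq\rho(H)$ for transitive subgroups. This monotonicity is a short coset-decomposition argument: for any intersecting $\mathcal{F}\subset G$ and any left coset $gH$, the translate $g^{-1}(\mathcal{F}\cap gH)\subseteq H$ is again intersecting, and summing $|\mathcal{F}\cap gH|\leq\rho(H)|H_\omega|$ over the $[G:H]$ cosets yields $|\mathcal{F}|\leq\rho(H)|G_\omega|$.

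The first step uses Lemma~\ref{lem:cyclic} together with the classification of $2$-transitive groups of prime degree recalled in Section~\ref{sect:permutation-groups}. Since $|\overline{G}|>p$, the Sylow $p$-subgroup $\overline{P}=\langle\overline{a}\rangle$ is cyclic of order $p$ and its normalizer is a Frobenius group $\overline{P}\rtimes\langle\beta\rangle$ of order $pd$ with $d\mid p-1$ and $d\geq 2$. A case analysis through the classification then shows that outside the exceptional class $\psl{2}{q}\leq\overline{G}\leq\pgammal{2}{q}$ with $p=q+1$ Fermat, one can choose $\beta$ of order $d$ satisfying either $d\geq 3$ odd or $d\geq 6$: for $\operatorname{Soc}(\overline{G})=\alt(p)$ we get $d\in\{(p-1)/2,p-1\}\geq 3$ for $p\geq 7$ (with $p=5$ lying in the Fermat exception); for $p=11,23$ only $d=5,11$ occur; and for $\operatorname{Soc}(\overline{G})=\psl{n}{q}$ with $n\geq 3$ prime, $d$ is a multiple of the odd prime $n$, so $d$ is either odd and $\geq 3$ or even and $\geq 2n\geq 6$.

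Given such a $\beta$, Maru\v{s}i\v{c}'s theorem supplies a semi-regular $a\in G$ of order $p$ whose image $\overline{a}$ generates the chosen Sylow; I will pick a lift $b\in G$ of $\beta$ and set $H:=G(a,b)=\langle\ker(G\to\overline{G}),a,b\rangle$. Because the minimal normal subgroup $N\leq\ker(G\to\overline{G})$ is an elementary abelian $3$-group whose orbit partition is exactly $\mathcal{B}$, $N$ is transitive on each block; together with $\langle a\rangle$ transitive on $\mathcal{B}$, this makes $H$ transitive on $\Omega$. Moreover $\overline{H}=\overline{P}\rtimes\langle\overline{b}\rangle$ is the chosen Frobenius group and $\ker(H\to\overline{H})=\ker(G\to\overline{G})$ is still derangement-free, so $H$ inherits all the structural assumptions used in Sections~\ref{sect:no-involutions} and \ref{sect:involutions}. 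I then apply the appropriate solvable-case theorem: if $\ker(G\to\overline{G})$ contains an involution, Remark~\ref{rmk:fixed-points-involution} replaces $b$ by an element projecting to $\beta$ but fixing $B_1$ pointwise, and Theorem~\ref{thm:special-case-3} gives $\rho(H)=\max\{1,3/d\}=1$; otherwise, either $b$ can be arranged to fix $B_1$ pointwise (via the $b\mapsto b^2$ trick of Section~\ref{sect:no-involutions} when $d$ is odd) and Theorem~\ref{thm:special-case} gives the same bound, or $b|_{B_1}$ is a transposition with $d$ even, forcing $d\geq 6$ and hence $\rho(H)=\max\{1,6/d\}=1$ via Theorem~\ref{thm:special-case-2}. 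Combined with the monotonicity, this yields $\rho(G)=1$.

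The hardest part will be the case analysis in Step~1: verifying through the classification that the exceptional family $\psl{2}{q}\leq\overline{G}\leq\pgammal{2}{q}$ with Fermat prime $p=q+1$ is exactly where the only $d$ available in $\overline{G}$ is $2$ (coming from the dihedral normalizer of a Singer cycle in $\psl{2}{q}$ with $q$ even), so that the bounds $\max\{1,3/d\}$ and $\max\{1,6/d\}$ from the solvable-case theorems degrade to $3/2$ and $3$ respectively and can no longer force $\rho(G)=1$.
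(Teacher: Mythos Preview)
Your proof follows essentially the same strategy as the paper's: pass to the normalizer $\overline{P}\rtimes Q$ of a Sylow $p$-subgroup in $\overline{G}$, lift it to a transitive subgroup $H=G(a,b)\leq G$, apply the solvable-case theorems to $H$, and conclude via the monotonicity $\rho(G)\leq\rho(H)$. In fact your treatment of which among Theorems~\ref{thm:special-case}, \ref{thm:special-case-2}, \ref{thm:special-case-3} applies is more explicit than the paper's, which simply writes $\rho(M(a,b))=\max\{1,3/|Q|\}$ without separating out the $\max\{1,6/d\}$ case coming from Theorem~\ref{thm:special-case-2}.

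There is one omission in your case analysis: you do not treat $\operatorname{Soc}(\overline{G})=C_p$, i.e.\ $\overline{G}=\agl{1}{p}$, which is $2$-transitive and hence falls under Assumption~\ref{assumption2}. Here $|Q|=p-1$; if $p-1$ has an odd prime factor you may take $d$ equal to that factor, and if $p-1$ is a power of $2$ then $p$ is a Fermat prime, with $p\geq 17$ giving $d=p-1\geq 16$. The residual case $p=5$ is handled in the paper by the observation (used inside Claim~\ref{claim:last} for $\alt(5)$) that no transitive group of degree $15$ satisfies Assumption~\ref{assumption}; you need to invoke the same fact, since $\agl{1}{5}$ is not contained in the exceptional $\psl{2}{4}$--$\pgammal{2}{4}$ range and your bound there would only yield $\rho(H)\leq 3/2$.
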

	\begin{proof}
	As $\overline{G}$ is $2$-transitive, it is one of the groups described in \eqref{trans1}-\eqref{trans5}. Let $P\leq \overline{G}$ be a Sylow $p$-subgroup. By Lemma~\ref{lem:cyclic}, $P$ is cyclic. Since $\overline{G}$ is $2$-transitive, the subgroup $P$ cannot be self-normalizing. Therefore, $\operatorname{N}_{\overline{G}}\left(P\right) = P\rtimes Q$, for some cyclic subgroup $Q\leq \overline{G}$ such that $|Q|= d\neq 1$ and $d \mid (p-1)$. 
	
	\begin{claim}
		$Q$ has an element of order at least $3$, unless $\psl{2}{q}\leq \overline{G} \leq\pgammal{2}{q} $, where $p=q+1$ is a Fermat prime.\label{claim:last}
	\end{claim}
	\begin{proof}[Proof of Claim~8]
	If $\overline{G} = \agl{1}{p}$, then $\operatorname{N}_{\overline{G}}(P) = P\rtimes Q \cong \agl{1}{p}$, so $Q$ admits an element of odd order if $p-1$ is not a power of $2$. Assume now that $p-1= 2^k$, for some $k\geq 1$. If $k\geq 2$, then it admits an element of order larger than $3$. If $k = 1$, then $p = 3$ contradicts the fact that $p\geq 5 $. This settles \eqref{trans1}.
	
	If $\overline{G} = \alt(p)$, then $\operatorname{N}_{\overline{G}}(P) \cong C_{p} \rtimes C_{\frac{p-1}{2}}$. Similar to the previous paragraph, if $\frac{p-1}{2}$ is not a power of $2$, then $Q\leq \operatorname{N}_{\overline{G}}(P)$ admits the desired element. If $p-1 = 2^k$, for some $k\geq 3$, then a similar result holds. If $k = 2$, then $Q$ does not have an element of order larger than $2$. However, this case can be omitted from the analysis since no transitive group of degree $15$ satisfy Assumption~\ref{assumption}, and we already know that $\mathcal{I}_{15} = \{1\}$. This settles \eqref{trans2}.
	
	For \eqref{trans3}, the normalizers of a Sylow $11$-subgroup of $\psl{2}{11}$ and $\mathieu{11}$ are both isomorphic to $C_{11}\rtimes C_5$.  Similarly, the normalizer of the groups in \eqref{trans4} are both isomorphic to $C_{23}\rtimes C_{11}$.
	
	Finally, we consider the socle in \eqref{trans5}, that is, $\psl{n}{q}$ for some prime number $n$ and a prime power $q$, such that $p=\frac{q^n-1}{q-1}$. It is well known that $\psl{n}{q}$ admits a Singer cycle $A$ of order $p$. Since a Singer subgroup (i.e., a subgroup generated by a Singer cycle) is isomorphic to a Sylow $p$-subgroup in this case, the normalizer of the Singer cycle $\langle A\rangle$ is a Frobenius group. Then by \cite{hestenes1970singer}, we must have that $n$ is an odd prime, or $n =2$ and $4\nmid (q+1)$. Assume that $n$ is an odd prime. If $\Phi \in \Aut(\mathbb{F}_{q^n}/\mathbb{F}_q)$ is the Frobenius automorphism, then $\Phi$ induces a collineation $B_\Phi$ of $\pg{n-1}{q}$. Then, $o(B_\Phi) = n$, and $\operatorname{N}_{\psl{n}{q}}(\langle A \rangle) = \langle A\rangle \rtimes \langle B_\Phi \rangle$. Thus, $Q$ admits an element of order $n\geq 3$.
	
	 If $n = 2$, then it is well known that the normalizer of a cyclic group of order $p = q+1$ is isomorphic to $\dih{p}$, so $|Q|=2$.
	\end{proof}
	 Now, let $M\leq G$ be such that $\overline{M} = P\rtimes Q = \langle\alpha\rangle \rtimes \langle \beta \rangle$. Clearly, $M$ is transitive since $\ker(G\to\overline{G})\neq 1$, so $\rho(G)\leq \rho(M)$. If $a,b\in G$ such that $\overline{a} = 
	 \alpha\in P$ and $\overline{b} = \beta\in Q$, then 
	\begin{align*}
		\rho(G)\leq \rho(M) = \rho(M(a,b)) = \max \{ 1,\tfrac{3}{|Q|}\}.
	\end{align*}
	
	Therefore, we only need to show that $Q\leq \operatorname{N}_{\overline{G}}(P)$ contains an element of order at least $3$ normalizing $P$ to show that $\rho(G) = 1$. By Claim~\ref{claim:last}, we conclude that $\rho(G) = 1$ unless $\psl{2}{q} \leq \overline{G} \leq \pgammal{2}{q}$, where $p=q+1$ is a Fermat prime, in which case $1\leq \rho(G)\leq \frac{3}{2}$.
	\end{proof}

	\section{Concluding remarks}\label{sect:conclusion}
	In this paper, we showed that if $p$ is an odd prime, then for any imprimitive group $G$ of degree $3p$ which is not quasiprimitive (i.e.,  admitting a non-trivial and intransitive normal subgroup), $\rho(G) \in \{1,\frac{3}{2},3\}$, unless possibly when $p = 2^{2^k}+1$ is a Fermat prime and the induced action of $G$ on the unique $G$-invariant partition of $\Omega$ is an almost simple group containing a subgroup isomorphic to $\psl{2}{2^{2^k}}$. For the aforementioned case, we can only give an upper bound of $\frac{3}{2}$ on the intersection density of $G$. We are inclined to believe that the intersection density of these groups arising from Fermat primes and $\psl{n}{q}$ are equal to $1$. Thus, we pose the following question.
	\begin{qst}
		Let $p = q+1$ be a Fermat prime. Let $G\leq \sym(\Omega)$ be a transitive group of degree $3p$ satisfying Assumption~\ref{assumption} such that $\psl{2}{q} \leq \overline{G} \leq \pgammal{2}{q}$. Is it true that $\rho(G)=1$?\label{prob}
	\end{qst}

	The results in this paper are further evidence towards the veracity of Meagher's question in Question~\ref{qst:karen}. Provided that Question~\ref{prob} is affirmative, the only cases left to check are the quasiprimitive cases. It was proved in \cite{behajaina2022intersection} that the only quasiprimitive groups of degree $3p$ whose intersection densities are possibly larger than $1$ are almost simple groups with socle equal to $\psl{n}{q}$, where $n$ is a prime, $q$ is a prime power, and $p = \frac{q^n-1}{q-1}$ is an odd prime. In this case, $\ker(G\to \overline{G}) = 1$, so we cannot apply the arguments used in this paper anymore. 
	
	\vspace*{0.5cm}
	\noindent
	{\sc Acknowledgement:} The authors are supported in part by the Ministry of Education, Science and Sport of Republic of Slovenia (University of Primorska Developmental
	funding pillar).
	
%	\bibliography{ref-join}

\begin{thebibliography}{10}
		
		\bibitem{ahmadi2013erd}
		B.~Ahmadi and K.~Meagher.
		\newblock The {E}rd{\H{o}}s-{K}o-{R}ado property for some permutation groups.
		\newblock {\em Australas J. Combin}, {\bf 61}(1):23--41, 2015.
		
		\bibitem{albertson1985homomorphisms}
		M.~O. Albertson and K.~L. Collins.
		\newblock Homomorphisms of 3-chromatic graphs.
		\newblock {\em Discrete Math.}, {\bf 54}(2):127--132, 1985.
		
		\bibitem{behajaina2022intersection}
		A.~Behajaina, R.~Maleki, and A.~S. Razafimahatratra.
		\newblock Intersection density of imprimitive groups of degree $ pq$.
		\newblock {\em arXiv preprint arXiv:2207.07762}, 2022.
		
		\bibitem{cameron2003intersecting}
		P.~J. Cameron and C.~Y. Ku.
		\newblock Intersecting families of permutations.
		\newblock {\em European J. Combin.}, {\bf 24}(7):881--890, 2003.
		
		\bibitem{Frankl1977maximum}
		M.~Deza and P.~Frankl.
		\newblock On the maximum number of permutations with given maximal or minimal
		distance.
		\newblock {\em J. Combin. Ser. A}, {\bf 22}(3):352--360, 1977.
		
		\bibitem{dixon1996permutation}
		J.~D. Dixon and B.~Mortimer.
		\newblock {\em Permutation {G}roups}, volume~{\bf 163}.
		\newblock Springer Science \& Business Media, 1996.
		
		\bibitem{ellis2011intersecting}
		D.~Ellis, E.~Friedgut, and H.~Pilpel.
		\newblock Intersecting families of permutations.
		\newblock {\em J. Amer. Math. Soc.}, {\bf 24}(3):649--682, 2011.
		
		\bibitem{erdos1961intersection}
		P.~Erd\H{o}s, C.~Ko, and R.~Rado.
		\newblock Intersection theorems for systems of finite sets.
		\newblock {\em Q. J. Math.}, {\bf 12}(1):313--320, 1961.
		
		\bibitem{ernst_schmidt_2023}
		A.~Ernst and K.~Schmidt.
		\newblock Intersection theorems for finite general linear groups.
		\newblock {\em Math. Proc. Cambridge Philos. Soc.}, 175(1):129–160, 2023.
		
		\bibitem{godsil2009new}
		C.~Godsil and K.~Meagher.
		\newblock A new proof of the {E}rd{\H{o}}s--{K}o--{R}ado theorem for
		intersecting families of permutations.
		\newblock {\em European J. Combin.}, {\bf 30}(2):404--414, 2009.
		
		\bibitem{godsil2016erdos}
		C.~Godsil and K.~Meagher.
		\newblock {\em {E}rd\H{o}s-{K}o-{R}ado {T}heorems: {A}lgebraic {A}pproaches}.
		\newblock Cambridge University Press, 2016.
		
		\bibitem{hestenes1970singer}
		M.~D. Hestenes.
		\newblock Singer groups.
		\newblock {\em Canad. J. of Math.}, {\bf 22}(3):492--513, 1970.
		
		\bibitem{hujdurovic2022intersection}
		A.~Hujdurovi{\'c}, K.~Kutnar, B.~Kuzma, D.~Maru{\v{s}}i{\v{c}},
		{\v{S}}.~Miklavi{\v{c}}, and M.~Orel.
		\newblock On intersection density of transitive groups of degree a product of
		two odd primes.
		\newblock {\em Finite Fields Appl.}, {\bf 78}:101975, 2022.
		
		\bibitem{hujdurovic2021intersection}
		A.~Hujdurovi\'c, K.~Kutnar, D.~Maru\v{s}i\v{c}, and {\v{S}}.~Miklavi\v{c}.
		\newblock Intersection density of transitive groups of certain degrees.
		\newblock {\em Algebr. Comb.}, {\bf 5}(2):289--297, 2022.
		
		\bibitem{jordan1872recherches}
		C.~Jordan.
		\newblock Recherches sur les substitutions.
		\newblock {\em J. Math. Pures Appl.}, {\bf 17}:351--367, 1872.
		
		\bibitem{larose2004stable}
		B.~Larose and C.~Malvenuto.
		\newblock {S}table sets of maximal size in {K}neser-type graphs.
		\newblock {\em European J. Combin.}, {\bf 25}(5):657--673, 2004.
		
		\bibitem{li2020erd}
		C.~H. Li, S.~J. Song, and V.~Pantangi.
		\newblock {E}rd{\H{o}}s-{K}o-{R}ado problems for permutation groups.
		\newblock {\em arXiv preprint arXiv:2006.10339}, 2020.
		
		\bibitem{maruvsivc1981vertex}
		D.~Maru{\v{s}}i{\v{c}}.
		\newblock On vertex symmetric digraphs.
		\newblock {\em Discrete Math.}, {\bf 36}(1):69--81, 1981.
		
		\bibitem{meagher180triangles}
		K.~Meagher, A.~S. Razafimahatratra, and P.~Spiga.
		\newblock On triangles in derangement graphs.
		\newblock {\em J. Combin. Ser. A}, {\bf 180}:105390, 2021.
		
		\bibitem{meagher2011erdHos}
		K.~Meagher and P.~Spiga.
		\newblock An {E}rd{\H{o}}s--{K}o--{R}ado theorem for the derangement graph of
		${PGL}(2, q)$ acting on the projective line.
		\newblock {\em J. Combin. Ser. A}, {\bf 118}(2):532--544, 2011.
		
		\bibitem{AMC2554}
		A.~S. Razafimahatratra.
		\newblock On complete multipartite derangement graphs.
		\newblock {\em Ars Math. Contemp.}, {\bf 21}(1):1--07, 2021.
		
		\bibitem{razafimahatratra2021intersection}
		A.~S. Razafimahatratra.
		\newblock On the intersection density of primitive groups of degree a product
		of two odd primes.
		\newblock {\em J. Combin. Ser. A}, {\bf 194}:105707, 2023.
		
		\bibitem{spiga2019erdHos}
		P.~Spiga.
		\newblock The {E}rd{\H{o}}s-{K}o-{R}ado theorem for the derangement graph of
		the projective general linear group acting on the projective space.
		\newblock {\em J. Combin. Ser. A}, {\bf 166}:59--90, 2019.
		
		\bibitem{sagemath}
		{The Sage Developers}.
		\newblock {\em {S}ageMath, the {S}age {M}athematics {S}oftware {S}ystem
			({V}ersion 10.1)}, 2023.
		\newblock \url{ https://www.sagemath.org}.
		
	\end{thebibliography}
%	\bibliographystyle{abbrv}
	
\end{document}